\newcommand{\R}{\mathbb{R}}
\newcommand{\G}{\Gamma}
\newcommand{\eps}{\varepsilon}
\newtheorem{theorem}{Theorem}[section]
\newtheorem{lemma}[theorem]{Lemma}
\newtheorem{proposition}[theorem]{Proposition}
\newtheorem{corollary}[theorem]{Corollary}
\renewenvironment{proof}[1][Proof.]{\noindent\textbf{#1} }{$\hfill\square$\vspace{0.2 cm}\\}
\numberwithin{equation}{section}
\begin{document}

\title{{\bf{Stable determination of \\ surface impedance on a rough obstacle \\ by far field data} \thanks{Work supported in part by PRIN 20089PWTPS}}}
\author{Giovanni Alessandrini\footnote{Universit\`a degli Studi di Trieste, Italy, e-mail: \text {alessang@units.it}}  \and
Eva Sincich\footnote{Universit\`a degli Studi di Trieste, Italy,  e-mail: \text{esincich@units.it}} \and Sergio Vessella\footnote{Universit\`a degli Studi di Firenze, Italy,  e-mail: \text{sergio.vessella@dmd.unifi.it}}
}

\date{}

\maketitle

\begin{abstract}
\noindent We treat the stability of determining the boundary impedance of an obstacle by scattering data, with a single incident field. A previous result by Sincich (SIAM J. Math. Anal. 38, (2006), 434-451) showed a log stability when the boundary of the obstacle is assumed to be $C^{1,1}$-smooth. We prove that, when the obstacle boundary is merely Lipschitz, a log-log type stability still holds.
\end{abstract}

{\small{\bf Keywords:} Inverse scattering problem, impedance boundary condition, stability. \ }

{\small{\bf 2000 Mathematics Subject Classification }: 35R30, 35R25,
31B20.}

\section{Introduction}
In this note we consider the stability of the determination of an
impedance coefficient on an inaccessible boundary from scattering
data arising from a single incident wave.\\
More precisely we consider
\begin{equation}
   \label{Sc}
\left\{
\begin{array}{lr}
     \Delta u+k^{2}u=0, &\hbox{in } \mathbb{R}^{3}\setminus \overline{D},\\
      \frac{\partial u}{\partial \nu }+i\lambda u=0,& \hbox{on }\partial D,\\
\end{array}
\right.
\end{equation}
where $u=u^{s}+\exp (ikx\cdot \omega )$, $\omega \in \mathbb{S}^{2}$
and the scattered field $u^{s}$ satisfies the usual
\textit{Sommerfeld
radiation condition}%
\begin{equation}
\label{Sommerfield}
\lim_{r\rightarrow \infty }r\left( \frac{\partial u^{s}}{\partial r}%
-iku^{s}\right) =0\text{, \ \ }r=\left\vert x\right\vert.
\
\end{equation}
The scattered field has a well-known asymptotic behavior at $\infty
$
\begin{equation} \label{Farfield}
u^{s}(x)=\frac{\exp (ikr)}{r}\left\{ u_{\infty }\left(
\widehat{x}\right) +O\left( \frac{1}{r}\right) \right\} \
\end{equation}
as $r$ tends to $\infty$, uniformly with respect to $\widehat{x}=\frac{x}{%
\left\vert x\right\vert }$ and where $u_{\infty }$ is the so-called
far field pattern of the scattered wave (see \cite{CK}).\\

We deal with the inverse scattering problem of determining $\lambda=\lambda(x)$
when the far field $u_{\infty }$ is known for a given incident
direction $\omega$.\\

The stability for this problem was treated in \cite{Eva}, and
we refer to the bibliography therein for further references.\\

The result in \cite{Eva} can be summarized as follows.
If the unknown impedance $\lambda$ is Lipschitz and the boundary of the scatterer $D$ is $C^{1,1}$, then $\lambda$ depends on $u_{\infty}$ with a modulus of continuity of logarithmic type (with a single log !).\\

The purpose of this note is to investigate the stability when the regularity assumption on the scatterer is relaxed to Lipschitz. The requirement of the $C^{1,1}$ regularity at the boundary was mainly due to the fact that such regularity is needed in order to obtain a doubling inequality at the boundary for $|u|^2$ \cite[Theorem 4.6]{Eva}. We refer to \cite{AE} for a thorough discussion of this topic.
In brief, such a doubling inequality is needed because the impedance can be obtained from the total field $u$ (which is uniquely, and stably, determined by the far field pattern) by computing
\[
\lambda =\frac{i}{u}\frac{\partial u}{\partial \nu }\text{.}
\]
Consequently, it is necessary, in order to estimate $\lambda $, to have a
control on the rate of vanishing of $u$ on $\partial D$. The doubling
inequality implies that such a rate is at most algebraic in the $L^{2}$
-average sense, that is
\[
\int_{\Delta _{r}(x_{0})}\left\vert u\right\vert ^{2}\geq \frac{1}{K}r^{K}
\]%
for some constant $K>0$. Here $\Delta _{r}(x_{0})=B_{r}(x_{0})\cap \partial D
$ and $B_{r}(x_{0})$ is the ball of radius $r$ centered at $x_{0}\in
\partial D$. From this bound on the vanishing rate of $|u|^{2}$ the
stability for $\lambda $ with a \emph{single} log follows. Note that in
fact, in \cite{Eva} a slightly different route is followed, which involves the
notion of Muckenhoupt weights.

It is an open problem if the doubling inequality at the boundary holds true
when we relax the regularity assumption on $\partial D$ (see again \cite{AE}). Nevertheless, it is expected that the order of vanishing of 
$u_{|\partial D}$ can be still controlled somehow. In fact we shall prove
that, assuming $\partial D$ Lipschitz, $u_{|\partial D}$ has a rate of
vanishing controlled in an exponential fashion as follows
\begin{equation}\label{expweight}
\int_{\Delta _{r}(x_{0})}\left\vert u\right\vert ^{2}\geq
e^{-Kr^{-K}}\text{.}
\end{equation}
With the aid of such estimate we are able to prove a stability result of a
weaker type, namely with a log-log type modulus of continuity.\\
We remark that this phenomenon confirms a scheme that was already known for  different, but related inverse boundary value problems \cite{ABRV}. In that paper, inverse problems with unknown boundary for elliptic equations were treated and, on the unknown boundary a condition of Dirichlet or Neumann type was prescribed. It was shown that, in case that the unknown boundary is smooth enough to have a doubling inequality at the boundary, then the stability was of log type. Instead with weaker regularity assumptions only a log-log type estimate could be obtained. See for instance the proof of \cite[Theorem 2.1]{ABRV}. This scheme has also been confirmed with different inverse boundary problems \cite{MoRos, Ballerini}.

Let us mention also that, recently, a similar task to the present one was attempted in the preprint \cite{BCJ} by Bellassoued, Choulli and Jbalia. The main result in that paper also consists of a \textit{log-log} stability, but \textit{of local type only}. That is, the stability is proved only when the error on the unknown impedance is a-priori known to be small. Furthermore the stated regularity assumptions on the boundary have some unclear aspects. 

The plan of the paper is as follows. In Section \ref{mainass} we introduce notation, the main assumptions and state the main stability result, Theorem \ref{stabl}. Section \ref{directp} collects more or less well-known estimates on the direct problem \eqref{Sc}, we conclude it by stating a rather more delicate result, Theorem \ref{h1}, in which the boundary trace of the field $u$ is estimated in $H^1$. The proof is contained in the following
Section \ref{rellich}. It relies on the well-known Rellich's identity, the main theorem of this section being Theorem \ref{localbnds}. Section \ref{scattf} explores the stability of the determination of the scattered field in terms of the far field data. We review some known results, and we present some new ones, which rely on the analysis developed in the previous Section \ref{rellich}, see in particular Corollaries \ref{hmenouno} and \ref{hmenounmezzo}, where the  stability for negative norms of the normal derivative of $u$ is obtained. The occurrence of negative norms appears to be necessary here because, being the boundary only Lipschitz, the normal derivative of $u$ may be nonsmooth. In Section \ref{lps} we obtain,  Theorem \ref{thmlps}, the local lower bound on the vanishing rate of $u$ on $\partial D$ announced above in \eqref{expweight}. This is achieved though a quantitative estimate of unique continuation which is by now well-known as a Lipschitz Estimate of Propagation of Smallness, see for instance \cite{ABRV, MoRos}. In Section \ref{secweighted} we present an interpolation inequality between a weighted $L^1$-norm and a H\"{o}lder norm, when the weight satisfies a bound on its vanishing rate of the type \eqref{expweight}, Proposition \ref{weighted}. We finish the proof of Theorem \ref{stabl} in the final Section \ref{final}.

\section{Main assumptions and results}\label{mainass}

\subsection{Main hypotheses and notation}
\emph{\bf Assumptions on the domain.}\\
We shall assume throughout that $D$ is a bounded domain in $\mathbb{R}^3$, that is, for a given $d>0$, we require $\mbox{diam}D\le d$. Also we require 
that $\mathbb{R}^{3}\setminus \overline{D}$ is a connected set, and that the boundary $\partial D$ is Lipschitz
with
constants $r_0, M$.
More precisely,
for every $x_0 \in \partial D$, there exists a rigid transformation of
coordinates under which,
\begin{eqnarray}\label{gamma}
D \cap B_{r_0}(x_0)=\{(x',x_3): x_3>\gamma(x')\}\ ,
\end{eqnarray}
where $x \in \R^3,\ x=(x',x_3)$, with $x'  \in \R^2 , \ x_3 \in
\mathbb{R}$ and
$$\gamma :B'_{r_0}(x_0)\subset \R^2 \rightarrow \mathbb{R}$$
satisfying $\gamma(0)=0$ and $$\|\gamma\|_{C^{0,1}( B'_{r_0}(x_0))}\le
Mr_0,$$ where we denote by
$$\|\gamma\|_{C^{0,1}( B'_{r_0}(x_0))}=\|\gamma\|_{L^{\infty}(
B'_{r_0}(x_0))} +\ r_0\!\!\!\!\!\!\!\!\!\sup_{\substack {x,y  \in
B'_{r_0}(z_0)\\x\ne y }}
\frac{|\gamma (x)-\gamma (y)|}{|x-y|} _{\ \ \  } $$
and $B'_{r_0}(x_0)$ denotes a ball in $\R^2$.

For the sake of simplicity we shall assume that $0\in D$.

Fixed $R>d$, $\rho\in(0,r_0)$ and $x_0\in\partial D$, let us define the
following sets

\begin{eqnarray}
&&D^{+}=\R^3\setminus\overline{D},\\
&&D_R^{+}=B_R(0)\cap D^{+}, \\
&&E_{\rho}=\{x\in \R^n | \ 0<\mbox{dist}(x,\overline{D})<\rho) \},\\
&&D_{\rho}=\overline{D}\cup E_{\rho},\\
%&&D_{R,\rho}^{+}=\{x\in \overline{D_R^{+}}\
%:\mbox{dist}(x,\G_D)>\rho\},\\
%&&\G_{I}^{\rho}=\partial D_{R,\rho}^{+} \cap \G_I ,\label{innerportion}
%\\
&&\G_{\rho}(x_0)= B_{\rho}(x_0)\setminus \overline{D}\label{neigh}
,\\
&&\Delta_{\rho}(x_0)=B_{\rho}(x_0)\cap \partial
D.\label{neighb}
\end{eqnarray}

\emph{\bf A priori information on the impedance term.}\\
Given $\lambda_0>0$, we assume that 
the impedance coefficient 
$\lambda$ be\-longs to 
$C^{0,1}(\partial D,\R)$ and is such that
\begin{eqnarray}\label{limdalbasso}
\lambda(x)\ge\lambda_0>0
\end{eqnarray}
 for every $x\in \partial D$. Moreover we assume that, for a given constant
$\Lambda>0$, we
have that
\begin{eqnarray}\label{L}
\|\lambda\|_{C^{0,1}(\partial D)}\le \Lambda .
\end{eqnarray}

From now on we shall refer to the \emph{a priori data} as to the
following set of quantities,
$d, r_0, M,\lambda_0, \Lambda, k$.\\
In the sequel we shall denote with $\eta(t)$ a positive increasing concave
function defined on $(0, +\infty)$, that satisfies
\begin{eqnarray}\label{eta}
&&\eta(t)\le C(\log (t))^{-\vartheta},\ \ \ \mbox{for
every}\ \ 0<t<1 \ ,
\end{eqnarray}
where
$C>0,\vartheta>0$ are constants depending on the \emph{a priori
data} only.

\subsection{The main result}
\begin{theorem}[\textbf{Stability for $\lambda$}]\label{stabl} Let $\lambda_1, \lambda_2$ satisfy \eqref{limdalbasso}, \eqref{L}.
Let $u_i, \ i=1,2$, be the weak solutions to the problem \eqref{Sc}
with $\lambda=\lambda_i$ respectively and let $u_{i,\infty}$ be their
respective far field patterns.
If for some
 $\varepsilon> 0$, we have
\begin{eqnarray}\label{farf}
\|u_{1,\infty}-u_{2,\infty}\|_{L^2(\partial B_1(0))}\le \varepsilon,
\end{eqnarray}
then
\begin{eqnarray}\label{aim}
\|\lambda_1-\lambda_2\|_{L^{\infty}(\partial D)}\le \eta \circ \eta(\eps),
\end{eqnarray}
where $\eta$ is given by \eqref{eta}.
\end{theorem}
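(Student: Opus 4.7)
The plan is to extract the impedance difference $\lambda_1-\lambda_2$ pointwise on $\partial D$ by inverting, on a small boundary patch, a weighted boundary identity in which $|u_1|^2$ acts as weight. The final log--log modulus comes from the superposition of two log losses: one in passing from the far-field datum to boundary data (this is the single-log stability already present in the $C^{1,1}$ setting of \cite{Eva}), and a second one incurred when inverting the exponentially degenerate vanishing rate \eqref{expweight}, which is the price we pay for relaxing the boundary regularity to Lipschitz.

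\textbf{Step 1 (boundary identity).} From the impedance conditions $\partial_\nu u_i+i\lambda_i u_i=0$ on $\partial D$, subtract and rearrange to obtain
\[
(\lambda_1-\lambda_2)\,u_1 \;=\; i\bigl(\partial_\nu u_1-\partial_\nu u_2\bigr)+\lambda_2\bigl(u_2-u_1\bigr)\qquad\text{on }\partial D,
\]
and multiply by $\overline{u_1}$ to get
\[
(\lambda_1-\lambda_2)\,|u_1|^2 \;=\; i\bigl(\partial_\nu u_1-\partial_\nu u_2\bigr)\overline{u_1}+\lambda_2\bigl(u_2-u_1\bigr)\overline{u_1}.
\]
The left-hand side isolates the unknown $\lambda_1-\lambda_2$ weighted by $|u_1|^2$; the right-hand side is built of quantities controlled by the direct-problem stability of Sections \ref{directp}--\ref{scattf}.

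\textbf{Step 2 (weighted $L^1$ bound).} I would pair the identity against a Lipschitz cutoff $\chi_r$ supported in $\Delta_r(x_0)$, invoking Theorem \ref{h1} to control $\|u_1-u_2\|_{H^1(\partial D)}$ and Corollaries \ref{hmenouno}--\ref{hmenounmezzo} for $\partial_\nu(u_1-u_2)$ in negative Sobolev norms (unavoidable since, on a Lipschitz boundary, one cannot expect $\partial_\nu u_i$ to lie in $L^2(\partial D)$). These estimates are all of log type in $\varepsilon$, yielding
\[
\Bigl|\int_{\Delta_r(x_0)}\chi_r\,(\lambda_1-\lambda_2)\,|u_1|^2\Bigr|\;\le\;C\,\eta(\varepsilon),
\]
uniformly in $x_0\in\partial D$ and $r\in(0,r_0)$, with $\eta$ of the form \eqref{eta}.

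\textbf{Step 3 (weighted interpolation and inversion of the exponential weight).} Regard $\lambda_1-\lambda_2$ as an element of $C^{0,1}(\partial D)$ with $\|\lambda_1-\lambda_2\|_{C^{0,1}}\le 2\Lambda$, and $|u_1|^2$ as a weight obeying the exponential lower bound \eqref{expweight} supplied by Theorem \ref{thmlps}. Proposition \ref{weighted} interpolates the weighted $L^1$ norm of Step 2 against the Hölder norm, giving
\[
\|\lambda_1-\lambda_2\|_{L^\infty(\partial D)} \;\le\; \eta\bigl(\eta(\varepsilon)\bigr).
\]
Heuristically, if $\delta:=\|\lambda_1-\lambda_2\|_{L^\infty}$ is attained near some $x_0$, Lipschitz continuity forces $|\lambda_1-\lambda_2|\gtrsim\delta$ on $\Delta_{c\delta}(x_0)$, whence
\[
\delta\cdot e^{-K(c\delta)^{-K}}\;\lesssim\;\int_{\Delta_{c\delta}(x_0)}|\lambda_1-\lambda_2|\,|u_1|^2\;\le\;C\,\eta(\varepsilon);
\]
solving this transcendental inequality for $\delta$ produces exactly the double logarithm $\eta\circ\eta(\varepsilon)$.

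\textbf{Main obstacle.} The technical heart is not Step 1 or Step 3, which are essentially algebraic, but the passage from far-field closeness to a boundary estimate for $\partial_\nu(u_1-u_2)$ sharp enough to be paired with the Lipschitz test $\chi_r\,\overline{u_1}$. On a merely Lipschitz $\partial D$ this requires the Rellich-type $H^1$ boundary trace estimate of Theorem \ref{h1} together with negative-norm stability for the normal derivative; these are precisely the new ingredients developed in Sections \ref{rellich}--\ref{scattf}. Once they are available, the doubly logarithmic rate is the inevitable consequence of inverting the exponential weight \eqref{expweight} via Proposition \ref{weighted}.
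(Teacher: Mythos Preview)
Your strategy matches the paper's: derive the boundary identity $(\lambda_1-\lambda_2)u_1 = i(\partial_\nu u_1 - \partial_\nu u_2) + \lambda_2(u_2-u_1)$, control the right-hand side in a weak norm via the single-log results of Section~\ref{scattf}, then feed the outcome into Proposition~\ref{weighted} together with the exponential lower bound of Theorem~\ref{thmlps}. The only substantive difference is your Step~2. Where you pair the identity against a localized Lipschitz cutoff $\chi_r\overline{u_1}$ and then re-run the core of Proposition~\ref{weighted} by hand, the paper instead obtains a \emph{global} bound $\|u_1(\lambda_1-\lambda_2)\|_{L^2(\partial D)}\le\eta(\varepsilon)$ through the interpolation $\|f\|_{L^2}\le C\|f\|_{H^1}^{1/3}\|f\|_{H^{-1/2}}^{2/3}$ (the $H^1$ factor bounded a priori by Theorem~\ref{h1} and \eqref{L}, the $H^{-1/2}$ factor small by Corollary~\ref{hmenounmezzo} and Theorem~\ref{stabpc}), squares to get $\int_{\partial D}(\lambda_1-\lambda_2)^2|u_1|^2\le\eta(\varepsilon)$, and applies Proposition~\ref{weighted} with $f=(\lambda_1-\lambda_2)^2$, $w=|u_1|^2$. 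This is slightly cleaner since it avoids checking that $\|\chi_r\overline{u_1}\|_{H^1(\partial D)}$ is bounded uniformly in $r$. Two minor inaccuracies in your write-up: Theorem~\ref{h1} is an a priori bound, not a smallness estimate for $u_1-u_2$ (you need it for the test function, and Theorem~\ref{stabpc} for $u_1-u_2$); and your parenthetical that $\partial_\nu u_i\notin L^2(\partial D)$ on a Lipschitz boundary is actually false here, since Theorem~\ref{localbnds} gives $\partial_\nu u_i\in L^2$ once $u_i|_{\partial D}\in H^1$. The reason for the negative norms is that the \emph{stability} estimate of Lemma~\ref{stimaduale} naturally lands in $H^{-1}$, not a regularity obstruction.
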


\section{The direct problem}\label{directp}

Let us introduce the following space
$$H^1_{\mbox{loc}}(D^+)=\{v\in \mathcal{D}^{'}(D^+):\
v|_{D_{R}^+}\in H^1(D_{R}^+),\ \mbox{for every}\ R>0\ \mbox{s.t.}\
\overline{D}\subset B_{R}(0)\}$$
where $\mathcal{D}^{'}$ is the space of distribution on $D^+$.

A weak solution to the problem \eqref{Sc} is a function
$u=\exp{(ik\omega\cdot x)}+u^s$, where $u^s\in
H^1_{\mbox{loc}}(D^+)$ is a weak solution to the problem
\begin{eqnarray}\label{radiating}
\left\{
\begin{array}
{lll}
\Delta u^s +k^2u^s=0,& \mbox{in $D^+$},
\\
\dfrac{\partial u^s}{\partial \nu} +
i\lambda(x)u^s=-\dfrac{\partial}{\partial \nu}\exp{(ik\omega\cdot x)} -
i\lambda(x)\exp{(ik\omega\cdot
x)}, & \mbox{on $\partial D$},
\\
\lim_{r\rightarrow\infty}r\left(\dfrac{\partial u^{s}}{\partial
r}(r\hat{x})-iku^s(r\hat{x}) \right)=0, & \mbox{uniformly in}\
\hat{x}.
\end{array}
\right.
\end{eqnarray}
where $\nu$ is the inward unit normal to $D$.

\begin{lemma}[\textbf{Well-posedness}]\label{benposto}
The problem \eqref{radiating} has one and only one weak solution $u^s$.
Moreover, for every $R>d$, there exists a constant $C_R>0$ depending
on the \emph{a priori data} and on $R$ only, such that the following bound
holds
\begin{eqnarray}\label{dipcont}
\|u^s\|_{H^1(D_R^+)}\le C_R\ .
\end{eqnarray}
\end{lemma}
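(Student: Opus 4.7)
The plan is to reduce the exterior problem \eqref{radiating} to a variational problem on the bounded domain $D_R^+$ by introducing the exterior Dirichlet-to-Neumann map $T_R$ on an artificial sphere $\partial B_R(0)$ enclosing $\overline D$: for any radiating Helmholtz solution outside $\overline{B_R(0)}$, $T_R$ sends the Dirichlet trace to the Neumann trace, and the impedance condition eliminates the Neumann trace on $\partial D$. The equivalent formulation is to find $u^s \in H^1(D_R^+)$ with
\[
a(u^s,v) = \ell(v), \qquad v \in H^1(D_R^+),
\]
where
\[
a(u,v) = \int_{D_R^+}\!\bigl(\nabla u \cdot \nabla \bar v - k^2 u\bar v\bigr)\,dx + i\int_{\partial D}\!\lambda u\bar v\,d\sigma - \int_{\partial B_R}\!(T_R u)\bar v\,d\sigma,
\]
and $\ell$ is the antilinear functional induced by the Neumann and impedance traces of $e^{ik\omega\cdot x}$, whose dual norm is controlled by $k$, $\Lambda$ and the geometry.

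For uniqueness, I would consider the homogeneous problem and apply Green's identity on $D^+ \cap B_R$, letting $R\to\infty$. The impedance condition contributes $-i\int_{\partial D}\lambda|u^s|^2\,d\sigma$ on $\partial D$, while on $\{|x|=R\}$ the expansion of $|\partial_r u^s - iku^s|^2$ combined with the Sommerfeld condition \eqref{Sommerfield} yields
\[
\int_{\partial D}\lambda|u^s|^2\,d\sigma = -\frac{1}{2k}\int_{|x|=R}\!\bigl(|\partial_r u^s|^2 + k^2|u^s|^2\bigr)\,d\sigma + o(1).
\]
Since the left-hand side is nonnegative and the right-hand side is nonpositive up to $o(1)$, both vanish. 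By \eqref{limdalbasso} this forces $u^s|_{\partial D} \equiv 0$, and then, via the impedance condition, $\partial_\nu u^s|_{\partial D}\equiv 0$; unique continuation for the Helmholtz equation in the connected set $D^+$ then forces $u^s \equiv 0$.

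Existence will follow from the Fredholm alternative applied to $a$: the form differs from the coercive form $\int_{D_R^+}(\nabla u \cdot \nabla \bar v + u\bar v)\,dx - \int_{\partial B_R}(T_R u)\bar v\,d\sigma$ by compact perturbations (the term $-(k^2+1)\int u\bar v$ by Rellich-Kondrachov, and the impedance boundary term by compactness of the trace $H^1(D_R^+) \to L^2(\partial D)$); uniqueness then upgrades the Fredholm operator to a bounded inverse, whereupon \eqref{dipcont} is read off from the dual-norm estimate on $\ell$.

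The main technical point, rather than a true obstacle, is to make sure $C_R$ depends only on the a priori data and $R$. All ingredients entering the argument -- the coercivity constant, norms of $T_R$, trace inequalities on $\partial D$ and $\partial B_R$, and the operator norm of the Fredholm inverse -- are controlled by $r_0, M, d, \lambda_0, \Lambda, k$ and $R$; alternatively a contradiction argument on normalized sequences, combined with compactness and uniqueness, sidesteps the need to track each constant explicitly.
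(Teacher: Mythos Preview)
The paper does not actually prove this lemma; it simply refers to Lemma~3.1 of \cite{Eva} and to \cite{CCM}. Your outline is precisely the standard argument found in those sources: truncate to $D_R^+$ via the exterior Dirichlet-to-Neumann map $T_R$ on $\partial B_R$, obtain uniqueness from Green's identity combined with the radiation condition and Rellich's lemma, and deduce existence together with the bound \eqref{dipcont} from the Fredholm alternative applied to a compact perturbation of a coercive sesquilinear form. At the level of strategy there is nothing to contrast.

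One caveat worth recording. Your sesquilinear form and your uniqueness identity carry the signs appropriate to $\nu$ being the \emph{outward} unit normal to $D$ (the Colton--Kress convention). The present paper, however, declares $\nu$ to be the \emph{inward} normal. Read literally, that flips the boundary contribution and your displayed identity becomes
\[
\int_{\partial D}\lambda|u^{s}|^{2}\,d\sigma \;=\; +\,\frac{1}{2k}\int_{|x|=R}\bigl(|\partial_{r}u^{s}|^{2}+k^{2}|u^{s}|^{2}\bigr)\,d\sigma + o(1),
\]
so the ``nonnegative equals nonpositive'' step collapses and uniqueness does not follow from this computation. This is almost certainly a slip in the paper's stated convention (with the inward normal and $\lambda>0$ the impedance problem is not uniquely solvable for every $k$), and the cited well-posedness result confirms that the intended signs are yours; nonetheless you should flag the discrepancy explicitly rather than silently adopt the opposite convention.
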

\begin{proof}
For the proof we refer to Lemma 3.1 in \cite{Eva}, see also \cite{CCM} for a previous related result.
\end{proof}
\begin{theorem}[\textbf{$C^{\alpha}$ regularity at the
boundary}]\label{regolarita}

Let $u$ be the weak solution to \eqref{Sc}, then there exists a
constant $\alpha,\ 0<\alpha<1$, such that for every $R>d$  $u\in C^{\alpha}\left(\overline{D_{R}^+}\right)$. Moreover, there exists a
constant $C_{R}>0$ depending on the \emph{a priori data}, on
$R$ only, such that
\begin{eqnarray}\label{ciunoalfa}
\left \Vert u\right \Vert _{C^{\alpha }\left( \overline{D_{R}^{+}}\right) }\le C_R\  .
\end{eqnarray}
\end{theorem}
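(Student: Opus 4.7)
The plan is to prove the estimate for the total field $u$, and then transfer it to $u^{s}=u-\exp(ik\omega\cdot x)$ by noting that the incident plane wave is entire, so its $C^{\alpha}$-norm on $\overline{D_{R}^{+}}$ is trivially controlled by $k$ and $R$. The total field $u$ weakly solves the Helmholtz equation $\Delta u+k^{2}u=0$ in $D^{+}$ together with the impedance condition $\partial_{\nu}u+i\lambda u=0$ on $\partial D$, and Lemma \ref{benposto} already gives the quantitative bound $\|u\|_{H^{1}(D_{R}^{+})}\le C_{R}$ in terms of the \emph{a priori data}.

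First, I would dispose of the interior estimate. Since the Helmholtz operator has constant coefficients, $u$ is real-analytic inside $D^{+}$, and standard interior elliptic regularity upgrades the $H^{1}$ bound to any $C^{k}$ bound on every compact subset of $D_{R}^{+}$, with constants depending only on the distance to $\partial D$, on $k$, and on $R$.

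Second, and this is the heart of the argument, I would establish Hölder continuity up to the boundary. Fix $x_{0}\in\partial D$ and use the local coordinates of \eqref{gamma}. A bi-Lipschitz flattening of $\partial D\cap B_{r_{0}}(x_{0})$ transforms the Helmholtz equation into a uniformly elliptic equation in divergence form with bounded measurable coefficients on a half-ball, while the impedance condition becomes a Robin-type condition on the flat portion of the boundary, with coefficient bounded above by $\Lambda$ and below by $\lambda_{0}$. The De Giorgi--Nash--Moser boundary regularity theory, adapted to weak solutions of Robin boundary value problems on Lipschitz domains, then yields a Hölder exponent $\alpha\in(0,1)$ depending only on the \emph{a priori data} and a local bound of the form
\begin{equation*}
\|u\|_{C^{\alpha}(\overline{\Gamma_{r_{0}/2}(x_{0})})}\le C\bigl(\|u\|_{H^{1}(\Gamma_{r_{0}}(x_{0}))}+1\bigr),
\end{equation*}
with $C$ depending only on the \emph{a priori data}. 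A finite cover of $\partial D$ by balls $B_{r_{0}/2}(x_{i})$, whose cardinality is controlled by $d,r_{0},M$, then combines with the interior estimate to produce the global bound \eqref{ciunoalfa}.

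The main technical obstacle is the boundary Hölder estimate for the Robin problem on a merely Lipschitz boundary, and in particular the verification that the exponent $\alpha$ and the constant $C$ depend \emph{only} on $d,r_{0},M,\lambda_{0},\Lambda,k$. This is a by-now classical matter, handled through suitable adaptations of the De Giorgi--Nash--Moser machinery to Robin/oblique boundary conditions (e.g.\ via Caccioppoli estimates on half-balls combined with Moser iteration, the boundary term $i\lambda u$ being treated as a bounded lower-order perturbation); once it is in place, the remaining interior regularity and covering steps are routine.
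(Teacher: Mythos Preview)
Your proposal is correct and follows essentially the same route as the paper, which simply invokes the Moser iteration technique for boundary regularity under a Robin condition on a Lipschitz boundary and refers to \cite[Lemma 3.3]{EvaTesi} for details; your outline (bi-Lipschitz flattening, De Giorgi--Nash--Moser on half-balls with the impedance term as a bounded lower-order perturbation, then a covering argument) is precisely the content of that reference. One small remark: your opening sentence is slightly backwards---the theorem concerns $u$, not $u^{s}$, so there is nothing to ``transfer'' once you have the estimate for $u$; also, Lemma~\ref{benposto} literally bounds $\|u^{s}\|_{H^{1}(D_{R}^{+})}$, and the bound for $u$ follows by adding the smooth plane wave.
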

\begin{proof}
This is a more or less standard regularity estimate up to the boundary. The Moser iteration technique fits to this task. Details can be found in \cite[Lemma 3.3]{EvaTesi}. Note also that the arguments used there only require
the Lipschitz regularity of $\partial D$.
\end{proof}

\begin{corollary}[\textbf{Lower bound}]\label{lb}
There exists a radius $R_0>0$ depending on the a priori data only such that
\begin{eqnarray}\label{lbi}
|u(x)|>\frac{1}{2} \ \ \mbox{for any}\ \ |x|>R_0\ .
\end{eqnarray}
\end{corollary}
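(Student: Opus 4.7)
The plan is to use that $u=\exp(ik\,x\cdot\omega)+u^s$, where $|\exp(ik\,x\cdot\omega)|=1$, so by the triangle inequality
\[
|u(x)|\ge 1-|u^s(x)|,
\]
and it therefore suffices to exhibit a radius $R_0$, depending only on the a priori data, such that $|u^s(x)|<1/2$ for $|x|>R_0$. The whole point is that the decay rate of $u^s$ at infinity must be quantified in terms of the a priori data alone.

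To this end, I would fix once and for all an auxiliary radius $R_\ast>d$ (say $R_\ast=d+1$), so that $\overline{D}\subset B_{R_\ast}(0)$, and work in the exterior $|x|>R_\ast+1$. Since $u^s$ satisfies the Helmholtz equation in $D^+$ and is a radiating solution, Green's representation formula gives, for $|x|>R_\ast+1$,
\[
u^s(x)=\int_{\partial B_{R_\ast+1}(0)}\Bigl(u^s(y)\,\frac{\partial \Phi(x,y)}{\partial\nu(y)}-\Phi(x,y)\,\frac{\partial u^s(y)}{\partial\nu(y)}\Bigr)\,ds(y),
\]
with $\Phi(x,y)=\frac{e^{ik|x-y|}}{4\pi|x-y|}$. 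Both $\Phi(x,y)$ and $\nabla_y\Phi(x,y)$ are bounded by $C/|x|$ uniformly for $y\in\partial B_{R_\ast+1}(0)$ and $|x|\ge 2(R_\ast+1)$, with $C$ depending only on $k$ and $R_\ast$. On the other hand, by Lemma \ref{benposto} we control $\|u^s\|_{H^1(D^+_{R_\ast+2})}$ by a constant depending only on a priori data; combining this with standard interior $H^2$-regularity for the Helmholtz equation on the annulus $B_{R_\ast+2}(0)\setminus\overline{B_{R_\ast}(0)}$ (which lies at positive distance from $\partial D$) and the trace theorem, we obtain
\[
\|u^s\|_{L^2(\partial B_{R_\ast+1})}+\|\partial_\nu u^s\|_{L^2(\partial B_{R_\ast+1})}\le C,
\]
with $C$ again depending only on the a priori data.

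Plugging these bounds into the representation formula yields $|u^s(x)|\le C/|x|$ for $|x|\ge 2(R_\ast+1)$, with $C$ depending only on the a priori data. It then suffices to choose $R_0=\max\{2(R_\ast+1),\,2C\}$ to conclude $|u^s(x)|<1/2$, and hence $|u(x)|>1/2$, for $|x|>R_0$. The only nontrivial point is converting the qualitative decay given by the Sommerfeld condition into a quantitative one with explicit dependence on the a priori data, which is handled precisely by combining the Green representation with Lemma \ref{benposto} and interior regularity on an annulus separated from $\partial D$.
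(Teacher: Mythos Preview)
Your argument is correct and is precisely the standard quantitative decay argument the paper is alluding to when it cites \cite[Corollary 3.3]{Eva}: write $u=e^{ik\omega\cdot x}+u^s$, represent $u^s$ outside a fixed ball via the exterior Green formula with the radiating fundamental solution, and combine the $O(1/|x|)$ bounds on $\Phi$ and $\nabla_y\Phi$ with a priori control of the Cauchy data of $u^s$ on an auxiliary sphere, the latter coming from Lemma~\ref{benposto} plus interior regularity on an annulus away from $\partial D$. There is nothing to add; your handling of the only delicate point---making the decay constant depend only on the a priori data by fixing $R_\ast=d+1$ and invoking the uniform $H^1$ bound from Lemma~\ref{benposto}---is exactly right.
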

\begin{proof}
The proof relies on the same arguments discussed in \cite[Corollary 3.3]{Eva}.
\end{proof}

\begin{theorem}[\textbf{$H^1(\partial D)$}]\label{h1}
Let $u$ be as in Theorem \ref{regolarita}, then
\begin{eqnarray}
\|u\|_{H^1(\partial D)}\le C \ .
\end{eqnarray}
\end{theorem}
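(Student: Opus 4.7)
The strategy is to apply Rellich's identity for the Helmholtz equation on $D_R^+$ with a vector field $h$ transverse to $\partial D$, and to use the impedance condition $\partial_\nu u=-i\lambda u$ to convert the boundary term involving $\partial_\nu u$ into one controlled by $u$ itself. This should give a bound on $\|\nabla_T u\|_{L^2(\partial D)}$, which together with the $L^\infty(\partial D)$ bound coming from Theorem \ref{regolarita} yields $u\in H^1(\partial D)$.

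First I would construct a vector field $h\in C_c^\infty(\R^3;\R^3)$ supported in a neighborhood of $\partial D$ inside some ball $B_R(0)$ with $R>d$, and satisfying $h\cdot\nu\ge c_0>0$ almost everywhere on $\partial D$, with $c_0$ depending only on the Lipschitz constants $r_0, M$. In each graph patch \eqref{gamma} the constant field $-e_3$ (in the local rigid coordinates) is transverse to $\partial D$ with $(-e_3)\cdot\nu\ge (1+M^2)^{-1/2}$, and a finite cover of $\partial D$ (whose cardinality is controlled by $d,r_0$) together with a subordinate smooth partition of unity yields $h$.

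Next I would apply Rellich's identity on $D_R^+$. Taking real parts and using $\Delta u=-k^2u$, it reads
\begin{equation*}
\int_{\partial D}\!\bigl[(h\cdot\nu)|\nabla u|^2 - 2\,\mathrm{Re}\bigl((h\cdot\nabla u)\,\overline{\partial_\nu u}\bigr)\bigr]\,dS = \int_{D_R^+}\!\bigl[(\mathrm{div}\,h)|\nabla u|^2 - 2\,\mathrm{Re}(Dh\,\nabla u\!\cdot\!\nabla\bar u) + 2k^2\,\mathrm{Re}\bigl((h\cdot\nabla u)\bar u\bigr)\bigr]\,dx,
\end{equation*}
the contribution on $\partial B_R(0)$ vanishing by compact support of $h$. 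Decomposing $\nabla u=(\partial_\nu u)\nu + \nabla_T u$ and substituting $\partial_\nu u=-i\lambda u$, the boundary integrand reduces to
\[
(h\cdot\nu)\bigl(|\nabla_T u|^2-\lambda^2|u|^2\bigr)-2\,\mathrm{Re}\bigl((h_T\cdot\nabla_T u)(i\lambda\bar u)\bigr),
\]
and Young's inequality absorbs the cross term into half of the principal term $(h\cdot\nu)|\nabla_T u|^2\ge c_0|\nabla_T u|^2$, at the cost of an additive multiple of $\lambda^2|u|^2$. The volume integral on the right is bounded by $C\|u\|_{H^1(D_R^+)}^2$, hence by Lemma \ref{benposto}; using in addition the bound $\|u\|_{L^\infty(\partial D)}\le C$ from Theorem \ref{regolarita}, one arrives at
\[
\|\nabla_T u\|_{L^2(\partial D)}^2 \le C\bigl(\|\lambda\|_{L^{\infty}(\partial D)}^2\|u\|_{L^2(\partial D)}^2 + \|u\|_{H^1(D_R^+)}^2\bigr)\le C,
\]
which together with the $L^2(\partial D)$ control of $u$ gives the assertion.

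The main obstacle is making the Rellich identity rigorous when $\partial D$ is only Lipschitz and $u$ is merely in $H^1_{\mathrm{loc}}(D^+)\cap C^\alpha(\overline{D_R^+})$: a priori $\nabla u$ has traces only in $H^{-1/2}(\partial D)$, so the integral $\int_{\partial D}|\nabla u|^2\,dS$ is not literally defined. The usual remedy is to approximate $\partial D$ from outside by smooth surfaces $\partial D_\varepsilon$ with uniformly bounded Lipschitz constants (e.g.\ by mollifying the graph functions $\gamma$ in each patch), apply the identity in the smooth shell $D_R^+\setminus\overline{D_\varepsilon}$, and pass to the limit $\varepsilon\to 0$ using the uniform upper bounds obtained above together with the uniform $C^\alpha$ regularity of $u$ up to $\partial D_\varepsilon$. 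This approximation procedure is presumably the content of Theorem \ref{localbnds} in the next section, where the bound is first established locally in graph coordinates and then summed via the partition of unity constructed above.
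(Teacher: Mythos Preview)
Your proposal is correct and follows essentially the same approach as the paper: both use Rellich's identity to bound $\|\nabla_T u\|_{L^2(\partial D)}$ by $\|\partial_\nu u\|_{L^2(\partial D)}+\|u\|_{H^1(E_\rho)}$, then invoke the impedance condition $\partial_\nu u=-i\lambda u$ together with the a priori bounds from Theorem~\ref{regolarita} and Lemma~\ref{benposto}. The only difference is organizational---the paper first packages the Rellich step into the general two-sided estimate Theorem~\ref{localbnds} (proved via the local inequalities \eqref{Nloc}--\eqref{Dloc} on graph patches and a covering argument, with the Lipschitz justification deferred to \cite{JK,AMoRos}) and then reads off Theorem~\ref{h1} as an immediate corollary of \eqref{tangential}, whereas you carry out the global Rellich computation directly with the transverse field $h$.
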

The proof shall be given in the next section.

\section{Estimates at the boundary.}\label{rellich}

We begin by recalling the well-known fact that there exists $\rho_{0} \in \left( 0,r_{0}\right] $ depending only on $
M,r_{0},d$ and $k$ only such that, for every $\rho \in \left( 0,\rho _{0}
\right] $ the following coerciveness condition is satisfied
\begin{eqnarray}\label{coerc}
\int_{E_{\rho
}}\left( \left \vert \nabla v\right \vert ^{2}-k^{2}\left \vert v\right \vert
^{2}\right) \geq \frac{1}{2}\left \Vert v\right \Vert _{H^{1}\left( E_{\rho
}\right) }^{2} \text{ in }E_{\rho }
\end{eqnarray}

for all $v\in H_{0}^{1}\left( E_{\rho }\right) $, see for instance \cite[Lemma 8.4]{gt}.
In what follows we shall fix $\rho\in \left( 0,\rho _{0}\right] $.\\

\begin{theorem}\label{localbnds}
Let $v\in H^{1}\left( E_{\rho }\right) $ be a solution to
\begin{eqnarray}
\Delta v+k^{2}v=0  \text{ in }E_{\rho }\text{.}
\
\end{eqnarray}
If its trace $v_{|\partial D}\in H^{1}\left( \partial D\right) $ then $\frac{
\partial v}{\partial \nu }_{|\partial D}\in L^{2}\left( \partial D\right) $
and we have
\begin{eqnarray}\label{normal}
\left \Vert \frac{\partial v}{\partial \nu }\right \Vert _{L^{2}\left(
\partial D\right) }^{2}\leq C\left( \left \Vert \nabla _{T}v\right \Vert
_{L^{2}\left( \partial D\right) }^{2}+\left \Vert v\right \Vert _{H^{1}\left(
E_{\rho }\right) }^{2}\right) \text{.}
\end{eqnarray}
Conversely, if $\frac{\partial v}{\partial \nu }_{|\partial D}\in
L^{2}\left( \partial D\right) $ then $v_{|\partial D}\in H^{1}\left(
\partial D\right) $ and
\begin{eqnarray}\label{tangential}
\left \Vert \nabla _{T}v\right \Vert _{L^{2}\left( \partial D\right) }^{2}\leq
C\left( \left \Vert \frac{\partial v}{\partial \nu }\right \Vert _{L^{2}\left(
\partial D\right) }^{2}+\left \Vert v\right \Vert _{H^{1}\left( E_{\rho
}\right) }^{2}\right) \text{.}
\end{eqnarray}
Here $\nabla _{T}v$ denotes the tangential gradient of $v$ on $\partial D$
and $C$ depends on $M,r_{0},d,\rho $ and $k$ only.
\end{theorem}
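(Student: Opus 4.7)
The plan is to apply the classical Rellich identity with a vector field chosen to exploit the Lipschitz structure of $\partial D$ through its local graph representation \eqref{gamma}. First I would construct a Lipschitz vector field $\beta$, supported in $E_\rho$, vanishing on the outer boundary $\{x:\operatorname{dist}(x,\overline D)=\rho\}$, and satisfying the transversality bound $\beta\cdot\nu\ge c_0>0$ on $\partial D$, where $\nu$ is the inward unit normal to $D$ and $c_0$ depends only on $M$. In each local chart the inward normal direction to $D=\{x_3>\gamma(x')\}$ lies in a uniform open cone around $e_3$ because $\|\gamma\|_{C^{0,1}}\le Mr_0$, so the constant field $e_3$ is uniformly transverse; a partition of unity subordinate to the charts, combined with a cutoff $\chi$ equal to $1$ in a thinner shell, glues these local choices into a global $\beta$ whose $C^{0,1}$-norm is quantitatively controlled by $M,r_0,\rho$.

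The Rellich identity, with the Helmholtz equation used to eliminate the Laplacian, reads
\[
\operatorname{div}\!\left(\beta|\nabla v|^2-2\operatorname{Re}\!\bigl[(\beta\cdot\nabla v)\,\overline{\nabla v}\bigr]\right)=(\operatorname{div}\beta)|\nabla v|^2-2D\beta(\nabla v,\overline{\nabla v})+k^2\beta\cdot\nabla|v|^2.
\]
Integrating over $E_\rho$, the outer boundary contribution vanishes by the cutoff, and the three volume terms are dominated via Cauchy--Schwarz by $C\|v\|_{H^1(E_\rho)}^2$ using the Lipschitz bound on $\beta$. On $\partial D$ one decomposes $\nabla v=(\partial_\nu v)\nu+\nabla_T v$ and finds that the surface integrand collapses to
\[
(\beta\cdot\nu)\bigl(|\nabla_T v|^2-|\partial_\nu v|^2\bigr)-2\operatorname{Re}\!\bigl[\overline{\partial_\nu v}\,(\beta_T\cdot\nabla_T v)\bigr],
\]
where $\beta_T$ is the tangential component of $\beta$. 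Using $\beta\cdot\nu\ge c_0>0$ and absorbing the cross term by Young's inequality then isolates either $\|\partial_\nu v\|_{L^2(\partial D)}^2$ or $\|\nabla_T v\|_{L^2(\partial D)}^2$ on the left, yielding \eqref{normal} and \eqref{tangential} respectively.

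The main obstacle is to make this identity rigorous when $\partial D$ is only Lipschitz and $v$ is a priori only $H^1(E_\rho)$: the traces of $\nabla v$ appearing in the surface integral are precisely what the conclusion aims to define. I would handle this by the standard inner approximation, replacing $\partial D$ by the smooth level surfaces $\partial D^{(h)}=\{x\in E_\rho:\operatorname{dist}(x,\overline D)=h\}$ for small $h>0$. On the smooth domain between $\partial D^{(h)}$ and the outer boundary, $v$ is $C^\infty$ by interior elliptic regularity and the identity holds classically. The identity itself produces a uniform-in-$h$ $L^2$-bound on the normal derivative (respectively the tangential gradient) on $\partial D^{(h)}$ in terms of the assumed $L^2$-datum on $\partial D$ and $\|v\|_{H^1(E_\rho)}$; weak compactness along $h\to 0$, combined with the identification of weak limits with the prescribed trace through the Lipschitz charts, transfers the surface identity from $\partial D^{(h)}$ to $\partial D$ and completes the argument.
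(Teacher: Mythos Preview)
Your global Rellich-identity argument is sound and is essentially the Jerison--Kenig/Ne\v{c}as route; the paper does the same thing but organized differently. Instead of building one global transverse field and applying the identity on all of $E_\rho$, the paper first proves the local estimates
\[
\int_{\Delta_r}|\partial_\nu v|^2 \le C\!\left(\int_{\Delta_{2r}}|\nabla_T v|^2+\int_{C_{2r}^+}|\nabla v|^2+|v|^2\right)
\]
(and the companion inequality) on half-cylinders over one Lipschitz chart, then covers $\partial D$. For the Helmholtz term the paper offers a neat shortcut you might like: set $V(x,t)=v(x)e^{kt}$, which is harmonic in $C_{2r}^+\times\mathbb{R}$, and read off the $k\neq 0$ estimates from the known harmonic case, rather than carrying $k^2\beta\cdot\nabla|v|^2$ through the computation. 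Your global approach avoids the covering bookkeeping; the paper's local approach lets it quote the $k=0$ case directly from \cite{AMoRos}.

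One technical slip to fix: the level sets $\partial D^{(h)}=\{x:\operatorname{dist}(x,\overline D)=h\}$ are \emph{not} smooth in general when $\partial D$ is merely Lipschitz (the distance function is only Lipschitz, and reentrant corners of $\partial D$ produce corners in nearby level sets), so the phrase ``smooth level surfaces'' is not justified and you cannot invoke classical interior regularity on them as stated. The standard remedy is to run the approximation in the charts: replace $\partial D$ locally by the translated graphs $\{x_3=\gamma(x')+h\}$, on which $v$ is $C^\infty$ by interior regularity, apply the Rellich identity between two such surfaces, and pass to the limit $h\to 0$ using dominated convergence together with the uniform-in-$h$ bound the identity itself yields (this is where the nontangential control enters; see \cite{JK} or the treatment in \cite{AMoRos}). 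Alternatively, approximate $D$ by smooth domains $D_j$ with uniformly bounded Lipschitz character (Ne\v{c}as/Verchota approximation) and argue by stability of the constants. Either fix keeps your argument intact; only the choice of approximating surfaces needs to change.
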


\begin{proof} A priori inequalities of this sort were first proven by Payne and Weinberger \cite{PW1}, \cite{PW2}. A proof when $k=0$ and with Lipschitz boundary is due to Jerison and Kenig \cite{JK} . The underlying tool for such estimates relies on the celebrated Rellich's identity \cite{Re}. The adaptation to the case of the Helmholtz equation ($k\neq0$) can be obtained as follows.\\
Recalling the local graph representation of $\partial D$ introduced in \eqref{gamma}, let us define, for every $r\in \left( 0,2\rho_{0}\right] $, the half cylinder
\begin{eqnarray}
C_{r}^{+}=\left \{ x=\left( x^{\prime },x_{3}\right) \in \mathbb{R}
^{3}||x^{\prime }|<r\text{, }\gamma \left( x^{\prime }\right)
<x_{3}<Mr\right \} \ ,
\end{eqnarray}
and let us denote
\begin{eqnarray}
\Delta_{r}=\left \{ x=\left( x^{\prime },x_{3}\right) \in \mathbb{R}
^{3}||x^{\prime }|<r\text{, }x_3=\gamma \left( x^{\prime }\right)
\right \} \ .
\end{eqnarray}

First we prove local estimates of the following form
\begin{eqnarray}\label{Nloc}
\int_{\Delta _{r}}\left \vert \frac{\partial v}{\partial \nu }\right \vert
^{2}\leq C\left( \int_{\Delta _{2r}}\left \vert \nabla _{T}v\right \vert
^{2}+\int_{C_{2r}^{+}}\left \vert \nabla v\right \vert ^{2}+\left \vert
v\right \vert ^{2}\right)
\end{eqnarray}

\begin{eqnarray}\label{Dloc}
\int_{\Delta _{r}}\left \vert \nabla _{T}v\right \vert ^{2}\leq C\left(
\int_{\Delta _{2r}}\left \vert \frac{\partial v}{\partial \nu }\right \vert
^{2}+\int_{C_{2r}^{+}}\left \vert \nabla v\right \vert ^{2}+\left \vert
v\right \vert ^{2}\right)
\end{eqnarray}
for every $r\in \left( 0,\rho_{0}\right] $ and $C$ depends on $M,r_{0},r$ and $k
$ only.
These estimates are proven in the case $k=0$ in \cite[Proposition 5.1]{AMoRos}. The extension to the case $k\neq0$ can be either obtained by a straightforward adaptation of the argument in \cite[Section 5]{AMoRos}, or else by introducing an auxiliary variable $t$ and posing $V(x,t)=v(x)e^{kt}$ which is harmonic in $C_{2r}^{+}\times \mathbb{R} $. Writing down the analogs in higher dimension of \eqref{Nloc} and \eqref{Dloc} for $V$, we readily deduce the above stated inequalities for $v$. The final step consists of covering $\partial D$ with neighbourhoods $\Delta_r$, with $r$ small enough so that the corresponding half cylinders $C_{2r}^{+}$ remain within $E_{\rho}$. The number of the neighbourhoods $\Delta_r$ needed to cover $\partial D$ can be estimated in terms of $M,r_{0},d,\rho $, see \cite[Proof of Prop. 3.3]{AMoRos}.
\end{proof}
\begin{proof}[Proof of Theorem \ref{h1}]
Immediate consequence of \eqref{tangential} in combination with the impedance condition in \eqref{Sc} and Lemma \ref{benposto}.
\end{proof}

We shall denote by $\vartheta \in C^{\infty }\left( \mathbb{R}^{n}\right) $ a 
mollification of the characteristic function $\chi_{D_{\rho /2}}$ such that
$0\leq \vartheta \leq 1$, $\vartheta \equiv 1$ in $D_{\rho /3}$, $\vartheta \equiv 0$
in $\mathbb{R}^{n}\setminus D_{2\rho /3}$, $\left \vert D^{m}\vartheta
\right \vert \leq C_{m}\rho ^{-m}$ in $\mathbb{R}^{n}$, where $C_{m}$ depends
on $m$, $d$ and $r_{0}$.

\begin{lemma} \label{stimaduale}
Let $u\in H^{1}\left( E_{\rho }\right) \cap C^{0}\left( \overline{E_{\rho }}
\right) $,  be a solution to%
\begin{eqnarray}
\Delta u+k^{2}u=0\text{ in }E_{\rho }\text{.}
\end{eqnarray}
We have
\begin{eqnarray}\label{H^{-1}}
\left \Vert \frac{\partial u}{\partial \nu }\right \Vert _{H^{-1}\left(
\partial D\right) }\leq C\left \Vert u\right \Vert _{L^{\infty }\left( E_{\rho
}\right) }
\end{eqnarray}
where $C$ depends on $M,r_{0},d,\rho $ and $k$ only.
\end{lemma}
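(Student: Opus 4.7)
The plan is a duality argument. By definition
\begin{equation*}
\Bigl\|\tfrac{\partial u}{\partial \nu}\Bigr\|_{H^{-1}(\partial D)}
=\sup\Bigl\{\,\textstyle\int_{\partial D}\phi\,\tfrac{\partial u}{\partial\nu}\,dS \,:\, \phi\in H^{1}(\partial D),\ \|\phi\|_{H^{1}(\partial D)}\le 1\,\Bigr\},
\end{equation*}
so it suffices to bound the pairing on the right by $C\|u\|_{L^\infty(E_\rho)}$ uniformly in $\phi$. The core idea is to produce an auxiliary Helmholtz solution $\Phi$ on $E_\rho$ with $\Phi_{|\partial D}=\phi$, and then to integrate by parts twice so as to transfer derivatives from $u$ onto $\Phi$.

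I would first let $\Phi\in H^{1}(E_{\rho})$ be the unique weak solution of the Dirichlet problem
\begin{equation*}
\Delta\Phi+k^{2}\Phi=0\ \text{ in }\ E_{\rho},\qquad\Phi=\phi\ \text{ on }\ \partial D,\qquad\Phi=0\ \text{ on }\ \partial E_{\rho}\setminus\partial D,
\end{equation*}
whose well-posedness and the energy bound $\|\Phi\|_{H^{1}(E_{\rho})}\le C\|\phi\|_{H^{1}(\partial D)}$ follow from Lax--Milgram via the coerciveness inequality \eqref{coerc}. Because $\Phi_{|\partial D}=\phi$ lies in $H^{1}(\partial D)$, Theorem \ref{localbnds} applies to $\Phi$ and yields the crucial boundary bound
\begin{equation*}
\Bigl\|\tfrac{\partial\Phi}{\partial\nu}\Bigr\|_{L^{2}(\partial D)}\le C\bigl(\|\nabla_{T}\phi\|_{L^{2}(\partial D)}+\|\Phi\|_{H^{1}(E_{\rho})}\bigr)\le C\|\phi\|_{H^{1}(\partial D)}.
\end{equation*}

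Next I would use the cutoff $\vartheta$ introduced immediately before the statement (it equals $1$ in a neighborhood of $\partial D$ and vanishes in a neighborhood of $\{\mathrm{dist}(\cdot,\overline D)=\rho\}$) and apply Green's identity twice in $E_\rho$ to the pair $(u,\vartheta\Phi)$. Since $\vartheta\equiv 1$ and $\nabla\vartheta\equiv 0$ on $\partial D$, while $\vartheta$ vanishes together with its derivatives near the outer boundary, the unwanted boundary contributions drop; using also $\Delta u=-k^{2}u$ and $\Delta\Phi=-k^{2}\Phi$, the two interior $k^{2}$-terms cancel each other and one is left with the key identity
\begin{equation*}
\int_{\partial D}\phi\,\tfrac{\partial u}{\partial\nu}\,dS=\int_{\partial D}u\,\tfrac{\partial\Phi}{\partial\nu}\,dS-\int_{E_\rho}u\bigl(2\nabla\vartheta\cdot\nabla\Phi+\Phi\,\Delta\vartheta\bigr)\,dx.
\end{equation*}

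Each term on the right is then readily controlled by $C\|u\|_{L^\infty(E_\rho)}\|\phi\|_{H^{1}(\partial D)}$: the boundary integral by Cauchy--Schwarz on $\partial D$ combined with the $L^{2}$-estimate on $\partial_\nu\Phi$ above; the bulk integral by Cauchy--Schwarz with $\|\Phi\|_{H^{1}(E_\rho)}\le C\|\phi\|_{H^{1}(\partial D)}$ and the fact that $\nabla\vartheta$ and $\Delta\vartheta$ are bounded and compactly supported inside $E_\rho$. Taking the supremum over $\phi$ completes the argument. The real difficulty lies in the boundary piece $\int_{\partial D}u\,\partial_\nu\Phi$: in a merely Lipschitz domain there is no reason \emph{a priori} for $\partial_\nu\Phi$ to belong to any space better than $H^{-1/2}(\partial D)$, and it is precisely the Rellich-type estimate of Theorem \ref{localbnds} that upgrades this to $L^{2}(\partial D)$ control and thereby makes the $H^{-1}$-duality argument go through in the rough-boundary setting.
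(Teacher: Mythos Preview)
Your argument is correct and is essentially the paper's own proof: solve the auxiliary Dirichlet Helmholtz problem for the test function, multiply by the cutoff $\vartheta$, apply Green's identity, and control $\partial_\nu\Phi$ on $\partial D$ via the Rellich-type estimate of Theorem~\ref{localbnds}. The only cosmetic differences are notation ($\phi,\Phi$ versus the paper's $\zeta,\varphi$) and that the paper records the slightly sharper bound $\|\varphi\|_{H^{1}(E_\rho)}\le C\|\zeta\|_{H^{1/2}(\partial D)}$, which is not needed here.
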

\begin{proof} In view of \eqref{coerc}, for any $\zeta \in H^{1}\left( \partial D\right) $ we can
consider the unique solution $\varphi \in H^{1}\left( E_{\rho }\right) $ to
the Dirichlet problem
\begin{eqnarray}
\left \{
\begin{array}{c}
\Delta \varphi +k^{2}\varphi =0\text{ in }E_{\rho } \ ,\\
\varphi =\zeta \text{ on }\partial D \ , \\
\varphi =0\text{ on }\partial E_{\rho }\setminus \partial D \ .
\end{array}
\right.
\end{eqnarray}
Moreover we have
\begin{eqnarray}\label{estim}
\left \Vert \varphi \right \Vert _{H^{1}\left( E_{\rho }\right) }\leq
C\left \Vert \zeta \right \Vert _{H^{1/2}\left( \partial D\right) }
\end{eqnarray}
with $C>0$ only depending on $M,r_{0},d,\rho $ and $k$. Let $\vartheta $ the
previously introduced cutoff function and denote $\psi =\vartheta \varphi $.
The Green's identity gives
\begin{eqnarray}
\int_{\partial D}\psi \frac{\partial u}{\partial \nu }-\int_{\partial D}u%
\frac{\partial \psi }{\partial \nu }=\int_{E_{\rho }}u\left( \Delta \psi
+k^{2}\psi \right)
\end{eqnarray}
hence
\begin{eqnarray}
\left \vert \int_{\partial D}\zeta \frac{\partial u}{\partial \nu }%
\right \vert \leq \left \vert \int_{\partial D}u\frac{\partial \varphi }{%
\partial \nu }\right \vert +\left \vert \int_{E_{\rho }}u\left( 2\nabla \vartheta
\cdot \nabla \varphi +\left( \Delta \vartheta +k^{2}\vartheta \right) \varphi
\right) \right \vert
\end{eqnarray}
applying \eqref{normal} to $\varphi $ and taking into account \eqref{estim} we get
\begin{eqnarray}
&\left \vert \int_{\partial D}\zeta \frac{\partial u}{\partial \nu }
\right \vert \leq  &\nonumber \\ &\leq C\left( \left \Vert u\right \Vert _{L^{\infty }\left( E_{\rho
}\right) }\left \Vert \zeta \right \Vert _{H^{1}\left( \partial D\right)
}+\left \Vert u\right \Vert _{L^{\infty }\left( E_{\rho }\right) }\left \Vert
\varphi \right \Vert _{H^{1}\left( E_{\rho }\right) }\right)\leq &\nonumber \\ &\leq C\left \Vert
u\right \Vert _{L^{\infty }\left( E_{\rho }\right) }\left \Vert \zeta
\right \Vert _{H^{1}\left( \partial D\right) } &
\end{eqnarray}
and the thesis follows by duality.
\end{proof}

\section{Stability for the scattered field} \label{scattf}

\begin{lemma}[\textbf{From the far field to the near
field}]\label{farnear}
Let $u_i,u_{i,\infty},\ i=1,2$, be as in Theorem \ref{stabl}. Suppose
that, for some $\varepsilon,\ 0<\varepsilon<1$,
\eqref{farf} holds, then there exist a radius $R_1>0$ and a constant
$C>0$, depending
on the \emph{a priori data} only, such that
\begin{eqnarray}\label{nearfield}
\|u_1-u_2\|_{L^2(B_{R_1+1}(0)\setminus B_{R_1}(0))}\le C
{\varepsilon}^{\alpha(\varepsilon)},
\end{eqnarray}
where $\alpha(\varepsilon)$ is defined as
\begin{eqnarray}\label{lam}
\alpha(t)=\frac{1}{1+\log(\log({t}^{-1})+e)}\ .
\end{eqnarray}
\end{lemma}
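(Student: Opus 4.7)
The plan is to reduce \eqref{nearfield} to a quantitative analytic continuation from the far field data back to spheres of finite radius, treating it via the classical spherical harmonics / Hankel-function expansion of radiating solutions. Since the two incident waves coincide, the difference $w:=u_1-u_2=u_1^s-u_2^s$ is itself a radiating solution of the Helmholtz equation in $\mathbb{R}^3\setminus\overline{D}$, and its far field pattern is precisely $u_{1,\infty}-u_{2,\infty}$.

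First I would fix some $\rho>d$ (close to $d$) and use the spherical expansion
\begin{equation*}
w(x)=\sum_{n=0}^{\infty}\sum_{m=-n}^{n} a_{nm}\,h_n^{(1)}(k|x|)\,Y_n^m(x/|x|),
\end{equation*}
so that, according to \cite{CK},
$u_{1,\infty}-u_{2,\infty}=k^{-1}\sum_{n,m}i^{-n-1}a_{nm}Y_n^m$. The hypothesis \eqref{farf} then reads $\sum_{n,m}|a_{nm}|^2\le k^2\varepsilon^2$, while Lemma \ref{benposto} combined with a trace inequality on $\partial B_\rho$ yields the \emph{a priori} bound
\begin{equation*}
\sum_m|a_{nm}|^2\le \frac{C}{|h_n^{(1)}(k\rho)|^2},\qquad n=0,1,2,\ldots,
\end{equation*}
with $C$ depending on the \emph{a priori data} only.

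Next I would choose $R_1$ so that $R_1/\rho$ is sufficiently larger than one (depending on the \emph{a priori data} only), pick $r\in[R_1,R_1+1]$, and estimate $\|w\|_{L^2(\partial B_r)}^2=r^2\sum_n|h_n^{(1)}(kr)|^2\sum_m|a_{nm}|^2$ by splitting the series at an integer $N$ to be optimised. For the low modes $n\le N$ I would use the far field hypothesis together with the asymptotic $|h_N^{(1)}(kr)|^2\le C((2N-1)!!)^2(kr)^{-2N-2}$, giving a contribution of the order of $\varepsilon^2(N!)^2(kr)^{-2N}$. For the high modes $n>N$ I would combine the \emph{a priori} bound with the elementary fact that $|h_n^{(1)}(kr)|/|h_n^{(1)}(k\rho)|$ is at most of order $(\rho/r)^{n+1}$ for $n$ large, producing a convergent tail of the order of $(\rho/r)^{2N}$.

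Balancing the two contributions forces $N$ to be of order $\log(\varepsilon^{-1})/\log\log(\varepsilon^{-1})$ and, after elementary algebra, yields the double-logarithmic exponent $\alpha(\varepsilon)$ in \eqref{lam}. Integrating the resulting bound on $\|w\|_{L^2(\partial B_r)}$ over $r\in[R_1,R_1+1]$ then gives \eqref{nearfield}. The main obstacle is the factorial growth of $|h_n^{(1)}|$ in $n$ at a fixed argument: this reflects the severe ill-posedness of recovering the near field from the far field data, and it is precisely what forces the double-logarithmic rate $\alpha(\varepsilon)$ in place of a H\"{o}lder one.
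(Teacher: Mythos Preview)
Your proposal is correct: the paper gives no independent argument here but simply refers to \cite[Lemma 4.1]{Eva}, which in turn rests on the near-field stability estimates of Isakov \cite{I} and Bushuyev \cite{Bu}. Your spherical-harmonics/Hankel-function expansion with a low/high-mode split and optimisation in $N$ is precisely the Bushuyev argument underlying those references, so your sketch coincides with the cited proof.
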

\begin{proof}
For the proof we refer to Lemma 4.1 in \cite{Eva}, which is based on the stability results for the near field achieved by Isakov in \cite{I} and  further developed by Bushuyev in \cite{Bu}.
\end{proof}

\begin{theorem}[\textbf{Stability at the boundary}]\label{stabpc}
Let $u_i,u_{i,\infty},\ i=1,2$, be as in Theorem \ref{stabl}. We have
that,
if for some $\eps> 0$, \eqref{farf} holds, then
\begin{eqnarray}\label{stabilitaalbordo}
\|u_1-u_2\|_{L^{\infty}(\partial D)}\le \eta(\eps)\ ,
\end{eqnarray}
where $\eta$ is given by \eqref{eta}, with a constant $C>0$ depending
on the \emph{a priori data} only.
\end{theorem}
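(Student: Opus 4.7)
The target is a single-log bound $\|u_1-u_2\|_{L^{\infty}(\partial D)}\le \eta(\varepsilon)$, and we already have the single-log near-field bound of Lemma \ref{farnear}. The plan is therefore to \emph{propagate} the $L^2$ smallness of $w := u_1 - u_2$ from the far annulus $B_{R_1+1}(0)\setminus B_{R_1}(0)$ inwards to a Lipschitz collar $E_\rho$ of $\partial D$, and then to upgrade the resulting $L^2$ smallness near the boundary to an $L^{\infty}$ bound on $\partial D$ via the Hölder a priori estimate of Theorem \ref{regolarita}.

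The key observation is that, since the two impedance problems are posed on the \emph{same} obstacle $D$, the difference $w=u_1-u_2$ is a weak solution of the homogeneous Helmholtz equation $\Delta w+k^2w=0$ in all of $D^+$, with a uniform a priori bound $\|w\|_{H^1(D_R^+)}+\|w\|_{C^{\alpha}(\overline{D_R^+})}\le C$ coming from Lemma \ref{benposto} and Theorem \ref{regolarita}. First I would invoke Lemma \ref{farnear} to obtain $\|w\|_{L^2(B_{R_1+1}\setminus B_{R_1})}\le C\varepsilon^{\alpha(\varepsilon)}$. A standard three-spheres (Hadamard) inequality for the operator $\Delta+k^2$, iterated along a chain of balls of uniformly bounded radius that joins the observation annulus to an arbitrary point at distance at least $\rho/2$ from $\partial D$ while remaining inside $D^+$, then yields
\begin{equation*}
\|w\|_{L^2(E_\rho)}\le C\bigl(\log\varepsilon^{-1}\bigr)^{-\vartheta_1}
\end{equation*}
for some $\vartheta_1>0$ depending only on the a priori data (using the uniform a priori $L^2$ bound on $w$ in $D_R^+$ as the large-ball denominator in the three-spheres estimate).

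Next I would interpolate this estimate against the Hölder bound of Theorem \ref{regolarita}. The elementary inequality
\begin{equation*}
\|w\|_{L^{\infty}(E_\rho)}\le C\|w\|_{C^{\alpha}(\overline{E_\rho})}^{\theta}\|w\|_{L^2(E_\rho)}^{1-\theta}
\end{equation*}
for a suitable $\theta \in (0,1)$ depending only on $\alpha$ and the space dimension shows that the single-log decay survives in $L^\infty(E_\rho)$. Since $w$ is continuous up to $\partial D\subset\overline{E_\rho}$, the same bound passes to $\partial D$, producing \eqref{stabilitaalbordo} after reabsorbing all constants and the harmless factor $\alpha(\varepsilon)$ into the general modulus $\eta$ of \eqref{eta}.

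The main obstacle is the propagation-of-smallness step: one needs a chain of balls of uniformly bounded length that joins the far annulus to each boundary point while staying within $D^+$, with all three-spheres exponents controlled solely by the a priori data. This is precisely where the Lipschitz character of $\partial D$ and the geometric constants $d,r_0,M$ are used; the same mechanism was adopted in \cite{Eva}, so the proof follows that scheme essentially unchanged, since no boundary regularity beyond Lipschitz is required at this level.
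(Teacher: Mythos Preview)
Your outline is essentially the paper's own argument: Lemma~\ref{farnear} for smallness on a far annulus, a propagation-of-smallness step to carry the $L^2$ bound on $w=u_1-u_2$ up to $\partial D$, then interpolation against the uniform $C^{\alpha}$ bound of Theorem~\ref{regolarita}. The paper packages the middle step by invoking the \emph{global} propagation-of-smallness theorem \cite[Theorem~5.3]{ARonRosV}, which directly yields $\|w\|_{L^2(D_R^+)}\le\eta(\varepsilon)$ on a Lipschitz domain.

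One small slip to be aware of: a chain of balls of \emph{uniformly bounded} radius ending at points with $\mathrm{dist}(\cdot,\partial D)\ge\rho/2$ controls $\|w\|_{L^2}$ only on the set $\{\mathrm{dist}(\cdot,\partial D)\ge\rho/2\}$, not on all of $E_\rho$ as you write. To reach the full collar (equivalently, to obtain $\|w\|_{L^2(D_R^+)}\le\eta(\varepsilon)$ as in the paper) one needs the extra cone-type construction with shrinking balls approaching $\partial D$; this is precisely the content of \cite[Theorem~5.3]{ARonRosV} and is the ``main additional tool'' the paper singles out. Alternatively you could stop at distance $\rho/2$, interpolate to $L^\infty$ there, bridge the last gap by H\"older continuity (paying $C\rho^{\alpha}$), and optimize in $\rho$; but as written your display $\|w\|_{L^2(E_\rho)}\le C(\log\varepsilon^{-1})^{-\vartheta_1}$ is not yet justified by the preceding sentence.
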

\begin{proof}
The arguments in \cite{Eva} need few adjustments. The main additional tool is a global estimate of propagation of smallness \cite[Theorem 5.3]{ARonRosV} which enables to achieve
\begin{eqnarray}
\|u_1 -u_2 \|_{L^2(D_R^+)}\le \eta(\varepsilon) \ .
\end{eqnarray}
The $C^\alpha$ bound obtained in Theorem \ref{regolarita} allows, by interpolation, to conclude.

\end{proof}

\begin{corollary}[\textbf{$H^{-1}$ bound}]\label{hmenouno}
Under the same hypothesis of Theorem \ref{regolarita} we have that

\begin{eqnarray}
\left \|\dfrac{\partial u_1}{\partial \nu}- \dfrac{\partial u_2}{\partial \nu}\right \|_{H^{-1}(\partial D)}\le \eta(\eps)\ .
\end{eqnarray}
\end{corollary}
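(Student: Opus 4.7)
The plan is to combine the duality estimate of Lemma \ref{stimaduale} with the stability bound hidden in the proof of Theorem \ref{stabpc}. Set $w=u_1-u_2$. Since both $u_1$ and $u_2$ are of the form $e^{ik\omega\cdot x}+u_i^s$ with the same incident wave, the difference $w=u_1^s-u_2^s$ solves the Helmholtz equation
\begin{equation*}
\Delta w+k^2w=0\quad\text{in }D^+,
\end{equation*}
and in particular in $E_\rho$. By Lemma \ref{benposto} we have $w\in H^1(E_\rho)$, and by Theorem \ref{regolarita} we have $w\in C^0(\overline{E_\rho})$, so the hypotheses of Lemma \ref{stimaduale} are met. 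Applying \eqref{H^{-1}} to $w$ yields at once
\begin{equation*}
\left\|\frac{\partial u_1}{\partial\nu}-\frac{\partial u_2}{\partial\nu}\right\|_{H^{-1}(\partial D)}\le C\,\|u_1-u_2\|_{L^\infty(E_\rho)}.
\end{equation*}

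Thus the task reduces to controlling $\|u_1-u_2\|_{L^\infty(E_\rho)}$ by $\eta(\varepsilon)$. This is essentially the content of the proof of Theorem \ref{stabpc}, but read one step earlier. Indeed, the argument sketched there produces, via the global propagation of smallness from \cite[Theorem 5.3]{ARonRosV} together with Lemma \ref{farnear}, a bound of logarithmic type
\begin{equation*}
\|u_1-u_2\|_{L^2(D_R^+)}\le\eta(\varepsilon),
\end{equation*}
for any fixed $R>d$. Interpolating this $L^2$ control against the uniform $C^\alpha(\overline{D_R^+})$ bound supplied by Theorem \ref{regolarita}, one obtains a logarithmic $L^\infty$ bound on any fixed compact subset of $\overline{D_R^+}$. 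Since $\overline{E_\rho}\subset\overline{D_R^+}$ as soon as $R$ is chosen large enough (which is consistent with the choice made in the proof of Theorem \ref{stabpc} to obtain the $L^\infty$ bound on $\partial D$), we deduce
\begin{equation*}
\|u_1-u_2\|_{L^\infty(E_\rho)}\le\eta(\varepsilon),
\end{equation*}
possibly with a different constant $C$ and exponent $\vartheta$ in \eqref{eta}, both still depending only on the \emph{a priori data}.

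Combining the two displayed inequalities yields the corollary. The only mildly delicate point is the second step: one must observe that the interpolation argument at the end of the proof of Theorem \ref{stabpc} does not really use that the set where $w$ is estimated is the boundary $\partial D$; any fixed compact subset of $D_R^+$ works equally well, with a constant depending on its distance to $\partial B_R(0)$. All other ingredients (the $H^{-1}$ duality bound, the $C^\alpha$ regularity, and the global propagation of smallness) have already been established, so no new technical obstacle arises.
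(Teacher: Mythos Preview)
Your argument is correct and follows essentially the same route as the paper's proof, which simply cites \eqref{stabilitaalbordo} and Lemma~\ref{stimaduale}. You are in fact more careful than the paper: Lemma~\ref{stimaduale} requires control of $\|u_1-u_2\|_{L^\infty(E_\rho)}$, not merely on $\partial D$, and you correctly observe that the interpolation step in the proof of Theorem~\ref{stabpc} already yields the stronger bound on all of $\overline{D_R^+}\supset\overline{E_\rho}$.
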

\begin{proof}
This is an immediate consequence of \eqref{stabilitaalbordo} and of Lemma \ref{stimaduale}.
 \end{proof}
\begin{corollary}\label{hmenounmezzo}
Under the same hypothesis of Theorem \ref{regolarita} we have that

\begin{eqnarray}
\left \|\dfrac{\partial u_1}{\partial \nu}- \dfrac{\partial u_2}{\partial \nu}\right \|_{H^{-\frac{1}{2}}(\partial D)}\le \eta(\eps)\ .
\end{eqnarray}
\end{corollary}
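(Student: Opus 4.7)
The goal is to interpolate between the $H^{-1}(\partial D)$ bound for the difference of normal derivatives that is already provided by Corollary \ref{hmenouno}, and a uniform $H^{1}(\partial D)$ bound for each $\partial_\nu u_i$ coming from the impedance condition together with Theorem \ref{h1}.

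The first step is to upgrade the regularity of the normal derivatives on $\partial D$. The impedance boundary condition in \eqref{Sc} yields
\begin{equation*}
\frac{\partial u_i}{\partial \nu} = -i\lambda_i u_i \quad \text{on } \partial D, \qquad i=1,2.
\end{equation*}
By Theorem \ref{h1}, $\|u_i\|_{H^{1}(\partial D)}\le C$, and by the a priori assumption \eqref{L}, $\lambda_i$ is Lipschitz on $\partial D$ with $\|\lambda_i\|_{C^{0,1}(\partial D)}\le \Lambda$. Since multiplication by a bounded Lipschitz function is a continuous operator on $H^{1}(\partial D)$ for a Lipschitz boundary, this gives a uniform bound
\begin{equation*}
\left\|\frac{\partial u_1}{\partial \nu}-\frac{\partial u_2}{\partial \nu}\right\|_{H^{1}(\partial D)} \le \tilde{C},
\end{equation*}
where $\tilde{C}$ depends on the a priori data only.

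The second step is the standard Sobolev interpolation inequality on $\partial D$. For any $f\in H^{1}(\partial D)$ one has
\begin{equation*}
\|f\|_{H^{-1/2}(\partial D)} \le C\, \|f\|_{H^{-1}(\partial D)}^{3/4}\, \|f\|_{H^{1}(\partial D)}^{1/4},
\end{equation*}
since $-1/2 = \tfrac{3}{4}(-1)+\tfrac{1}{4}(1)$. Applying this to $f=\partial_\nu u_1-\partial_\nu u_2$ and combining with Corollary \ref{hmenouno} and the bound above yields
\begin{equation*}
\left\|\frac{\partial u_1}{\partial \nu}-\frac{\partial u_2}{\partial \nu}\right\|_{H^{-1/2}(\partial D)} \le C\,\eta(\varepsilon)^{3/4}.
\end{equation*}
Since $\eta(t)\le C(\log t)^{-\vartheta}$ on $(0,1)$, the function $\eta(t)^{3/4}$ is again of the form $C'(\log t)^{-3\vartheta/4}$, hence still fits the schema \eqref{eta} (with adjusted constants $C',\vartheta'$ depending on the a priori data only). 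This gives the desired conclusion.

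The only possibly subtle point is the product estimate $\|\lambda_i u_i\|_{H^{1}(\partial D)}\le C\|\lambda_i\|_{C^{0,1}}\|u_i\|_{H^{1}(\partial D)}$ on the Lipschitz surface $\partial D$; this is however a routine consequence of the tangential derivative satisfying $\nabla_T(\lambda_i u_i)=(\nabla_T \lambda_i)u_i+\lambda_i\nabla_T u_i$ almost everywhere, with $\nabla_T \lambda_i\in L^\infty$, $u_i\in L^2$, $\lambda_i\in L^\infty$, $\nabla_T u_i\in L^2$.
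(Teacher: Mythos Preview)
Your argument is correct and follows essentially the same route as the paper: interpolate the $H^{-1}(\partial D)$ bound from Corollary~\ref{hmenouno} against a uniform higher-regularity bound on $\partial_\nu u_i$ obtained from the impedance condition. The only cosmetic difference is the choice of the upper endpoint: you use Theorem~\ref{h1} to place $\partial_\nu u_i=-i\lambda_i u_i$ in $H^{1}(\partial D)$ and interpolate with exponent $3/4$, whereas the paper invokes Theorem~\ref{regolarita} (the $C^{\alpha}$ bound, hence $\partial_\nu u_i\in L^{\infty}\subset L^{2}(\partial D)$) and interpolates between $H^{-1}$ and $L^{2}$ with exponent $1/2$; either endpoint yields a power of $\eta(\varepsilon)$, which is again of the form~\eqref{eta}.
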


\begin{proof}
The result follows by interpolation with the aid of the impedance condition in \eqref{Sc} and of Theorem \ref{regolarita}.
\end{proof}

\section{The estimate of Lipschitz propagation \\ of smallness} \label{lps}

Let us denote $r_1= \min\left\{r_0, \frac{1}{2}\right\}$.
\begin{theorem}\label{thmlps} Let $u$ be the weak solution to \eqref{Sc}.
For every $r$, $0<r<r_1$, and for every $x_0\in \partial D$ we have that
\begin{eqnarray} \label{eq:lps}
\int_{\Delta_r(x_0)}|u|^2\ge \exp(-Kr^{-K})\ \
\end{eqnarray}
where $K>0$ only  depends on the a priori data.
\end{theorem}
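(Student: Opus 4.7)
The plan is to combine an interior ``anchor'' estimate on $|u|$, coming from the non-decay of the total field at infinity, with a quantitative Lipschitz Propagation of Smallness (LPS) result in the Lipschitz exterior $D^+$, and finally to transfer the resulting interior lower bound onto the boundary surface integral via a Cauchy-type quantitative unique continuation that exploits the impedance condition.

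\textbf{Step 1 (Interior anchor).} Corollary~\ref{lb} provides a point $\bar{x} \in D^+$ with $|\bar{x}| > R_0$ such that $|u(\bar{x})| \geq 1/2$. By the uniform $C^\alpha$ bound of Theorem~\ref{regolarita}, this pointwise value propagates to a ball $B_{\rho_*}(\bar{x}) \subset D^+$ of radius $\rho_*$ depending only on the a priori data, on which $|u| \geq 1/4$. In particular,
\[
\int_{B_{\rho_*}(\bar{x})} |u|^2 \geq c_0 > 0.
\]

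\textbf{Step 2 (LPS toward the boundary).} Fix $x_0 \in \partial D$ and $r \in (0,r_1)$. The Lipschitz cone property at $x_0$ furnishes a point $y_r \in D^+$ satisfying $|y_r - x_0| \leq C_1 r$ and $\mathrm{dist}(y_r,\partial D) \geq c_2 r$. I connect $\bar{x}$ to $y_r$ by a chain of overlapping balls lying inside $D^+$, with radii adapted to the distance from the boundary, and iterate the standard three-sphere inequality for the Helmholtz equation. Following the scheme developed in \cite{ABRV, MoRos}, the accumulated H\"older losses along the chain yield the interior lower bound
\[
\int_{B_{c_2 r/2}(y_r)} |u|^2 \geq \exp\bigl(-K_1 r^{-K_1}\bigr),
\]
with $K_1$ depending only on the a priori data. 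The exponential-power profile is inherent to the Lipschitz geometry: the chain must traverse a region whose characteristic scale shrinks at a rate governed by the Lipschitz cone, forcing the number of three-sphere iterations to grow like a negative power of $r$.

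\textbf{Step 3 (Surface estimate via Cauchy data).} I argue by contradiction: assume $\int_{\Delta_r(x_0)} |u|^2 < \epsilon$. The $C^\alpha$ bound of Theorem~\ref{regolarita} upgrades this to $\|u\|_{L^\infty(\Delta_r(x_0))} \leq C\epsilon^\beta$ for some $\beta > 0$ depending on $\alpha$ only, and the impedance condition $\partial_\nu u = -i\lambda u$ together with \eqref{L} give the analogous control on $\partial_\nu u$. A quantitative Cauchy uniqueness argument for solutions of the Helmholtz equation in the Lipschitz domain $D^+$ then propagates this Cauchy smallness into a fixed interior ball containing $B_{c_2 r/2}(y_r)$. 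Comparing the resulting upper bound with the one from Step~2 and choosing the exponent $K$ in \eqref{eq:lps} sufficiently large, a contradiction is reached, proving the claim.

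\textbf{Main obstacle.} The crucial step is Step~2, where the Lipschitz (rather than $C^{1,1}$) regularity of $\partial D$ forces the three-sphere chain to pass through regions of rapidly diminishing scale, replacing the algebraic order of vanishing available in \cite{Eva} by the much weaker profile $\exp(-Kr^{-K})$. Step~3 is technically routine once the regularity apparatus of Sections~\ref{directp} and~\ref{rellich} is in hand, and Step~1 is immediate from Corollary~\ref{lb}.
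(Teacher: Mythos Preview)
Your proposal is correct and follows essentially the same strategy as the paper: the interior anchor (Step~1) and the three-spheres chain yielding the interior lower bound near the boundary (Step~2) coincide with the paper's second lemma, and your Step~3 --- transferring the interior bound to the surface via local Cauchy stability combined with the impedance condition --- is the content of the paper's first lemma, stated there as the direct inequality $\bigl(\int_{\Delta_r}|u|^2\bigr)^\beta \geq C\int_{\Gamma_{r/2}}|u|^2$ rather than by contradiction. The only technical deviation is that the paper controls $\|u\|_{H^{1/2}(\Delta_r)}$ by interpolating $\|u\|_{L^2(\Delta_r)}$ against the global $H^1(\partial D)$ bound of Theorem~\ref{h1}, whereas you detour through $\|u\|_{L^\infty(\Delta_r)}$ via the $C^\alpha$ estimate; both routes work, but be aware that your $L^2$--to--$L^\infty$ interpolation on the shrinking patch $\Delta_r$ carries an $r$-dependent constant that must be tracked and absorbed into the final exponent $K$.
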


The proof shall stem from the two Lemmas below.

\begin{lemma}Let $u$ be the weak solution to \eqref{Sc}.
For every $r$, $0<r<r_0$ and for every $x_0\in \partial D$ we have that
\begin{eqnarray}
\left(\int_{\Delta_{r}(x_0)}|u|^2\right)^{\beta}\ge C \int_{\Gamma_{\frac{r}{2}}(x_0)}|u|^2\ \
\end{eqnarray}
where $C>0,\ 0<\beta<\frac{1}{2}$ are constants depending on the a priori data only.
\end{lemma}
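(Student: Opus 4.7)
The plan is to read the claimed inequality as a H\"older-type stability estimate for the Cauchy problem associated with \eqref{Sc}, where the Cauchy surface is $\Delta_r(x_0)$. The crucial observation is that the impedance boundary condition $\partial u/\partial \nu + i\lambda u = 0$ on $\partial D$, combined with the uniform bound \eqref{L}, forces both the Dirichlet and the Neumann trace of $u$ on $\Delta_r(x_0)$ to be controlled by $\|u\|_{L^2(\Delta_r(x_0))}$ alone; indeed,
\begin{equation*}
\Bigl\|\frac{\partial u}{\partial \nu}\Bigr\|_{L^2(\Delta_r(x_0))}
\le \Lambda\, \|u\|_{L^2(\Delta_r(x_0))}.
\end{equation*}
In this sense the bulk quantity $\int_{\Gamma_{r/2}(x_0)}|u|^2$ becomes a bulk norm of the solution to a Cauchy problem whose data are jointly of size $\|u\|_{L^2(\Delta_r(x_0))}$.

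After locally flattening $\partial D$ by means of the Lipschitz graph \eqref{gamma}, $u$ solves a uniformly elliptic equation with Lipschitz principal coefficients in a half-ball. I would then appeal to a H\"older-type stability estimate for the Cauchy problem in Lipschitz domains, of the type developed in \cite{ABRV, AMoRos, MoRos} via Carleman estimates, obtaining constants $\tau \in (0,1)$ and $C>0$, depending only on the a priori data, such that
\begin{equation*}
\|u\|_{L^2(\Gamma_{r/2}(x_0))}
\le C \left(\|u\|_{L^2(\Delta_r(x_0))} + \Bigl\|\frac{\partial u}{\partial \nu}\Bigr\|_{L^2(\Delta_r(x_0))}\right)^{\tau} \|u\|_{L^\infty(E_\rho)}^{1-\tau}.
\end{equation*}
Combining this with the impedance identity above, absorbing the factor $\|u\|_{L^\infty(E_\rho)}^{1-\tau}$ into the constant by means of the $C^\alpha$ bound provided by Theorem \ref{regolarita}, and finally squaring, I would obtain
\begin{equation*}
\int_{\Gamma_{r/2}(x_0)}|u|^2 \le C \left(\int_{\Delta_r(x_0)}|u|^2\right)^{\tau},
\end{equation*}
i.e.\ the claim with $\beta := \tau$, which can be shrunk below $1/2$ at the cost of enlarging $C$ thanks to the trivial bound $\int_{\Delta_r}|u|^2 \le Cr^2$.

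The main obstacle lies in the H\"older Cauchy estimate itself, which is delicate under the mere Lipschitz regularity of $\partial D$: the lack of a second-order boundary control precludes a direct reflection argument across $\partial D$, so one has to invoke Carleman estimates adapted to a Lipschitz interface together with a careful geometric covering of $\Gamma_{r/2}(x_0)$ by chains of small half-balls anchored to $\Delta_r(x_0)$. This machinery is well-documented in the references cited above, and once it is in place the rest of the argument is a routine assembly of the impedance identity, the $C^\alpha$ regularity up to the boundary, and the local trace estimates of Section \ref{rellich}.
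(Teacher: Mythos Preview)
Your strategy coincides with the paper's: invoke a H\"older stability estimate for the Cauchy problem on $\Delta_r(x_0)$, use the impedance condition to control the Neumann datum by the Dirichlet one, and absorb the global factor via the a priori regularity bounds. There is, however, a gap in the form of the Cauchy estimate you quote. The result actually available in the cited literature (specifically \cite[Theorem~1.7]{ARonRosV}, which is what the paper invokes) is stated with the natural trace norms $H^{1/2}\times H^{-1/2}$,
\[
\|u\|_{L^2(\Gamma_{r/2}(x_0))}\le C\bigl(\|u\|_{H^{1/2}(\Delta_r(x_0))}+\|\partial_\nu u\|_{H^{-1/2}(\Delta_r(x_0))}\bigr)^{\delta}\,\|u\|_{L^2(\Gamma_r(x_0))}^{1-\delta},
\]
and not with $L^2\times L^2$ data as you write. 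The Neumann term is harmless ($H^{-1/2}$ is weaker than $L^2$), but the Dirichlet term is not: $\|u\|_{H^{1/2}(\Delta_r)}$ is not dominated by $\|u\|_{L^2(\Delta_r)}$ alone. The paper closes this via the interpolation
\[
\|u\|_{H^{1/2}(\Delta_r)}\le C\,\|u\|_{L^2(\Delta_r)}^{1/2}\,\|u\|_{H^1(\Delta_r)}^{1/2}
\]
together with the global bound $\|u\|_{H^1(\partial D)}\le C$ from Theorem~\ref{h1}. This is exactly where the final exponent $\beta=\delta/2<\tfrac12$ comes from, rather than $\beta=\tau$ followed by an ad hoc shrinking. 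Your argument becomes correct once this interpolation step and the use of Theorem~\ref{h1} are inserted; as written it over-claims what the Cauchy stability literature delivers.

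A smaller point: after a bi-Lipschitz flattening of $\partial D$ the principal coefficients are merely $L^\infty$, not Lipschitz as you state, and unique continuation may fail for general $L^\infty$ principal parts. The paper sidesteps this by applying the Cauchy stability directly to the Helmholtz operator (constant principal part) in the original coordinates, as does \cite{ARonRosV}; no flattening is needed.
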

\begin{proof} By the arguments in \cite[Theorem 1.7]{ARonRosV} concerning the well-known local estimate for the Cauchy problem and by the a priori bounds achieved in Theorem \ref{regolarita} and Theorem \ref{h1} we get that for any $0<r<r_0$ and for any $x_0\in \partial D$
\begin{eqnarray}
&\|u\|_{L^2(\Gamma_{\frac{r}{2}}(x_0))}\le &\nonumber \\ &\leq C \left(\|u\|_{H^{\frac{1}{2}}(\Delta_{r}(x_0))} +  \|\partial_{\nu}u\|_{H^{-\frac{1}{2}}(\Delta_{r}(x_0))} \right )^{\delta} \left( {\|u \|_{L^2(\Gamma_r(x_0))}}\right)^{1-\delta} & \end{eqnarray}
where $C>0, 0<\delta<1$ are constants depending on the a priori data only.
Note that the last factor $\|u \|_{L^2(\Gamma_r(x_0))}^{1-\delta}$ is bounded by a constant in view of Theorem \ref{regolarita}.

We recall that for any $0<r<r_0$ and for any $x_0\in \partial D$ the following interpolation inequality holds
\begin{eqnarray}
\|u\|_{H^{\frac{1}{2}}(\Delta_{r}(x_0))}\le C \|u\|^{\frac{1}{2}}_{L^{{2}}(\Delta_{r}(x_0))}\|u\|^{\frac{1}{2}}_{H^{{1}}(\Delta_{r}(x_0))}
\end{eqnarray}
where $C>0$ depends on the a priori data only.

Finally by the impedance condition on $\partial D$, by the above interpolation inequality and by the a priori bound in Theorem \ref{h1}, the theorem follows with $\beta=\frac{\delta}{2}$.
\end{proof}

\begin{lemma}Let $u$ be as above.
For every $r$, $0<r<\frac{r_0}{2}$ and for every $x_0\in \partial D$ we have that
\begin{eqnarray}
\int_{\Gamma_r(x_0)}|u|^2\ge C\exp(-k_1r^{-k_2})\ \
\end{eqnarray}
where $C,k_1,k_2>0$ are constants depending on the a priori data only.

\end{lemma}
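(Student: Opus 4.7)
The plan is to propagate, via an iterated three-spheres inequality along a chain of shrinking balls, the a priori lower bound $|u|\ge 1/2$ available far from the scatterer (Corollary \ref{lb}) into a single ball contained in $\Gamma_r(x_0)$. The announced exponential form $\exp(-k_1 r^{-k_2})$ arises because the number $N$ of balls in the chain needed to reach distance $\sim r$ from $\partial D$ grows only logarithmically in $1/r$, while each application of three-spheres raises the exponent by a factor $1/\sigma>1$, producing a final exponent $\sigma^{-N}$ which is polynomial in $1/r$.

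I would first select two anchor balls. By Corollary \ref{lb} combined with the uniform H\"older estimate of Theorem \ref{regolarita}, there exist a point $y^*\in D^+$ and a radius $\rho_*>0$ depending only on the a priori data such that $B_{\rho_*}(y^*)\subset D^+$ and $\int_{B_{\rho_*}(y^*)}|u|^2\ge c_0>0$. On the other side, the exterior Lipschitz cone condition at $x_0$ provides a point $y(r)\in D^+$ with $|y(r)-x_0|\le r/2$ and $\mathrm{dist}(y(r),\partial D)\ge c_1 r$, where $c_1=c_1(M)>0$; consequently $B_{c_1 r/2}(y(r))\subset \Gamma_r(x_0)$, and it suffices to bound $\int_{B_{c_1 r/2}(y(r))}|u|^2$ from below. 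Joining $y^*$ to $y(r)$ by a polygonal path of length bounded by the a priori data, I would cover it by a chain of balls $B_{\rho_j}(y_j)\subset D_R^+$, $j=0,\dots,N$, with $y_{j+1}\in B_{\rho_j/2}(y_j)$, $\rho_0=\rho_*$, $\rho_N=c_1 r/2$, and $\rho_{j+1}\ge \mu\rho_j$ for some $\mu\in(0,1)$ depending only on the a priori data (the geometric shrinking of the radii being forced by the cone aperture as one approaches $\partial D$). A standard counting argument, as in \cite{MoRos,ABRV}, yields $N\le A\log(1/r)+B$.

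For solutions of the Helmholtz equation, the classical three-spheres inequality then gives, at each link,
\[
\int_{B_{\rho_{j+1}}(y_{j+1})}|u|^2\ \ge\ C_*\Bigl(\int_{B_{\rho_j}(y_j)}|u|^2\Bigr)^{1/\sigma}\, M_0^{\,1-1/\sigma},
\]
with $\sigma\in(0,1)$ and $M_0$ an a priori $L^2$-bound on $u$ in $D_R^+$ (Lemma \ref{benposto}). Iterating $N$ times and using $\int_{B_{\rho_0}(y_0)}|u|^2\ge c_0$ yields a lower bound of the form $\int_{B_{\rho_N}(y_N)}|u|^2\ge c^{\,\sigma^{-N}}$ for some $c\in(0,1)$ depending only on the a priori data; since $\sigma^{-N}\le C\,r^{-A\log(1/\sigma)}=:C\,r^{-k_2}$, this reads precisely as $\exp(-k_1 r^{-k_2})$, and the desired conclusion follows from $B_{\rho_N}(y_N)\subset\Gamma_r(x_0)$. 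The main obstacle is the construction of the chain in its last segment: since $\partial D$ is only Lipschitz (not $C^{1,1}$), the safe radii must decay geometrically while the overlap condition is preserved, so the chain length is genuinely of order $\log(1/r)$; it is precisely the interplay of this logarithmic length with the exponential blow up $\sigma^{-N}$ in the three-spheres iteration that yields an exponential rather than polynomial rate of vanishing of $|u|^2$ on $\partial D$.
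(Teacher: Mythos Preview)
Your proposal is correct and follows essentially the same route as the paper: both propagate the lower bound from Corollary~\ref{lb} into a small ball $B_{cr}(\bar{x})\subset\Gamma_r(x_0)$ (obtained from the exterior Lipschitz cone) by iterating the three-spheres inequality along a chain of balls whose radii shrink geometrically near $\partial D$, so that the chain length is $O(\log(1/r))$ and the resulting exponent $\sigma^{-N}$ is polynomial in $1/r$. The paper simply packages this chain-in-a-cone argument by citing \cite[Proposition~3.1]{MoRos} and \cite[Section~E.3]{ADiB}, while you have unpacked it explicitly.
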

\begin{proof}
For the proof we mainly refer to \cite[Proposition 3.1]{MoRos}, where the authors achieved a Lipschitz propagation of smallness result for the $L^2$ norm of Jacobian matrix of the solution to the Lam\'{e} system. Nevertheless, the same arguments go through for the $L^2$ norm of the solution to the Helmholtz equation as well.
We provide below a sketch of the proof.

Let $\bar{x}\in \Gamma_{r}(x_0)$ be such that $B_{\frac{r}{8}}(\bar{x})\subset \Gamma_{r}(x_0)$. Now, by adapting the techniques developed in
\cite[Proposition 3.1]{MoRos}, which are mostly based on a standard propagation of smallness and the iterated use of the three spheres inequality along a chain of balls centered on the axis of a cone and with increasing radii (see also \cite[Section E.3]{ADiB}), we find that
\begin{eqnarray}
\int_{B_{\frac{r}{8}}(\bar{x})}|u|^2\ge C\exp(-k_1r^{-k_2})\int_{D^+_{2R_0}}|u|^2
\end{eqnarray}
where $R_0>0$ is the radius introduced in Corollary \ref{lb}.

By combining the lower bound stated in \eqref{lbi} and the obvious inequality \\$\int_{\Gamma_{\frac{r}{2}}(x_0)}|u|^2\ge\int_{B_{\frac{r}{8}}(\bar{x})}|u|^2$ we obtain the desired estimate.
\end{proof}

\begin{proof}[Proof of Theorem \ref{thmlps}]
From the above Lemmas we deduce
\begin{eqnarray}
\int_{\Delta_r(x_0)}|u|^2\ge C^{\frac{2}{\beta}}\exp(-\frac{k_1}{\beta}r^{-k_2})\ .
\end{eqnarray}
It is an elementary fact that we may find $K>0$, only depending on $C, k_1, k_2, \beta$ such that \eqref{eq:lps} follows.
\end{proof}

\section{A weighted interpolation inequality}\label{secweighted}
\begin{proposition}\label{weighted}
Given $ M, K >0$, let $w\geq 0$ be a measurable function on $\partial D$ satisfying the conditions
\begin{eqnarray}\label{infinito}
\|w\|_{L^{\infty}(\partial D)} \leq M
\end{eqnarray}
and
\begin{eqnarray}\label{zeriexp2}
  \|w\|_{L^{2}(\Delta_r(x))} \geq \exp(- Kr^{-K}) \ \text{for every } x \in \partial D\text{ and } r\in (0,r_1).
 \end{eqnarray}
 Let $f\in C^{\alpha}(\partial D)$ be such that
\begin{eqnarray}
|f(x)-f(y)| \leq E |x-y|^{\alpha} \ \text{for every } x, y \in \partial D.
\end{eqnarray}
If
\begin{eqnarray}\label{L1pesato}
\int_{\partial D} |f|w \leq \varepsilon \
 \end{eqnarray}
 then
 \begin{eqnarray}\label{stimalog}
 \left \Vert f\right \Vert _{L^{\infty }\left( \partial D\right) }\leq
E \eta\left(\frac{\varepsilon}{E}\right)
\end{eqnarray}
 where $\eta$ satisfies \eqref{eta} with constants only depending on $M,K, r_0, \alpha$.
 \end{proposition}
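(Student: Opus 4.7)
The strategy is to pick a near-maximizer of $|f|$ on $\partial D$, exploit the H\"older continuity of $f$ to keep $|f|$ comparable to its supremum on a small surface patch $\Delta_r(x_0)$, and then force the resulting lower estimate on the weighted integral to be compatible with the hypothesis \eqref{L1pesato}. The log-type modulus of continuity $\eta$ is produced at the end by optimizing in $r$.

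More concretely, since $\partial D$ is compact and $f$ is continuous, I pick $x_0\in\partial D$ with $L:=|f(x_0)|=\|f\|_{L^\infty(\partial D)}$. By the H\"older hypothesis, $|f(x)|\ge L-E|x-x_0|^\alpha\ge L/2$ whenever $|x-x_0|\le (L/(2E))^{1/\alpha}$. I therefore set
\begin{equation*}
r=\min\bigl\{r_1,\,(L/(2E))^{1/\alpha}\bigr\},
\end{equation*}
so that $r\in(0,r_1]$ and $|f|\ge L/2$ throughout $\Delta_r(x_0)$.

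Next I transfer the $L^2$ lower bound \eqref{zeriexp2} into an $L^1$ lower bound using \eqref{infinito}: since $w\le M$ pointwise,
\begin{equation*}
\int_{\Delta_r(x_0)} w\ \ge\ \frac{1}{M}\int_{\Delta_r(x_0)} w^2\ \ge\ \frac{1}{M}\exp(-2Kr^{-K}).
\end{equation*}
Combining this with the pointwise lower bound on $|f|$ and with \eqref{L1pesato} yields
\begin{equation*}
\varepsilon\ \ge\ \int_{\Delta_r(x_0)} |f|\,w\ \ge\ \frac{L}{2M}\exp(-2Kr^{-K}).
\end{equation*}

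Finally I treat the two regimes for $r$. If $r=r_1$, i.e.\ $L\ge 2Er_1^\alpha$, then the display above already gives the linear bound $L\le 2M\exp(2Kr_1^{-K})\,\varepsilon$, which is far stronger than \eqref{stimalog} once $\varepsilon/E$ is small. Otherwise $r=(L/(2E))^{1/\alpha}$ and, writing $t=L/E$, the inequality becomes
\begin{equation*}
\frac{\varepsilon}{E}\ \ge\ \frac{t}{2M}\,\exp\!\bigl(-2K\,(2/t)^{K/\alpha}\bigr).
\end{equation*}
The right-hand side is monotonically increasing in $t$ and, for small $t$, is dominated by the exponential factor $\exp(-C\,t^{-K/\alpha})$; inverting this relation produces $t\le C'(\log(E/\varepsilon))^{-\alpha/K}$, i.e.\ exactly \eqref{stimalog} with $\vartheta=\alpha/K$. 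The main point that needs a little care is the clean logarithmic inversion in the last step and checking that all the resulting constants depend only on $M,K,r_0,\alpha$ as claimed; everything else is a direct consequence of H\"older continuity, the hypothesis on $w$, and the Cauchy--Schwarz-type step $\int w\ge \int w^2/\|w\|_\infty$.
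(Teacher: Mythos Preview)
Your argument is correct and follows essentially the same route as the paper: pick a maximizer $x_0$ of $|f|$, use the H\"older bound on a small patch $\Delta_r(x_0)$, convert the $L^2$ lower bound on $w$ into an $L^1$ lower bound via $\int w\ge M^{-1}\int w^2$, and optimize in $r$. The only cosmetic difference is in the optimization: the paper writes the two-term inequality $\|f\|_{L^\infty}\le M\exp(2Kr^{-K})\,\varepsilon + Er^\alpha$ and then chooses $r=(4K)^{1/K}|\log(\varepsilon/E)|^{-1/K}$ directly, whereas you fix $r=(L/(2E))^{1/\alpha}$ first and invert afterwards; both yield the same exponent $\vartheta=\alpha/K$.
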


 \begin{proof}
 By \eqref{infinito} we have trivially
 \begin{eqnarray}\label{zeriexp1}
 \int_{\Delta_r(x)}w \geq M^{-1} \exp(- 2Kr^{-K}) \ \text{for every } x \in \partial D\text{ and } r\in (0,r_1).
 \end{eqnarray}
 Now fix $\xi \in \partial D$ such that $|f(\xi)|=  \|f\|_{L^{\infty}(\partial D)}$. For every $r>0$ and $x \in \Delta_r(\xi)$ we have
 \begin{eqnarray}\label{holder}
 |f(\xi)|\leq |f(x)| + Er^{\alpha} \ ,
 \end{eqnarray}
 by multiplying both sides of \eqref{holder} by $w$ and integrating over  $\Delta_r(\xi)$ we get
\begin{eqnarray}\label{integralino}
|f(\xi)|\int_{\Delta_r(\xi)}w  \leq \int_{\Delta_r(\xi)}w |f| + Er^{\alpha} \int_{\Delta_r(\xi)}w  \ ,
 \end{eqnarray}
 hence
 \begin{eqnarray}\label{integralino2}
\|f\|_{L^{\infty}(\partial D)}\leq \frac{\varepsilon}{\int_{\Delta_r(\xi)}w} +  Er^{\alpha} \leq \nonumber \\ \leq M \exp( 2Kr^{-K})\varepsilon +  Er^{\alpha}.
\end{eqnarray}
Now, if  $\varepsilon <E$ and $r=(4K)^{1/K} |\log \frac{\varepsilon}{E} |  ^{- 1/K }< r_1$, then we fix this value of $r$ in \eqref{integralino2} and the thesis follows. The remaining cases are trivial.
\end{proof}

\section{Conclusion} \label{final}

\begin{proof}[Proof of Theorem \ref{stabl}.]

By a standard interpolation estimate we deduce that
\begin{eqnarray}\label{int}
\|u_1(\lambda_1-\lambda_2)\|_{L^2(\partial D)}\le C \|u_1(\lambda_1-\lambda_2)\|^{\frac{1}{3}}_{H^1(\partial D)} \|u_1(\lambda_1-\lambda_2)\|^{\frac{2}{3}}_{H^{-\frac{1}{2}}(\partial D)}
\end{eqnarray}
where $C>0$ only depends on the a priori data.

It is an elementary observation, that we have
\begin{eqnarray}
\|u_1(\lambda_1-\lambda_2)\|_{H^1(\partial D)}\le\|\lambda_1-\lambda_2 \|_{C^{0,1}(\partial D)}\|u_1\|_{H^1(\partial D)}\le C
\end{eqnarray}
where $C>0$ is a constant depending on the a priori data only.

On the other hand, by the impedance condition
\begin{eqnarray}
\dfrac{\partial u_i}{\partial \nu} + i\lambda_i u_i= 0 \ \ \ \mbox{on} \ \partial D \ ,
\end{eqnarray}

for $i=1,2$ we also have that

\begin{eqnarray}
\|u_1(\lambda_1-\lambda_2)\|_{H^{-\frac{1}{2}}(\partial D)}\le \left \|\dfrac{\partial u_1}{\partial \nu} -\dfrac{\partial u_2}{\partial \nu}  \right\|_{H^{-\frac{1}{2}}(\partial D)} + C\|u_1 -u_2\|_{H^{-\frac{1}{2}}(\partial D)} \ .
\end{eqnarray}

By combining the results in Theorem \ref{stabpc} and in Corollary \ref{hmenounmezzo} we get
\begin{eqnarray}
\|u_1(\lambda_1-\lambda_2)\|_{H^{-\frac{1}{2}}(\partial D)}\le \eta(\varepsilon)\ .
\end{eqnarray}
where $C>0$ is a constant depending on the a priori data only.

Hence, by the inequality \eqref{int} we arrive at
\begin{eqnarray}
\|u_1(\lambda_1-\lambda_2)\|_{L^2(\partial D)}\le \eta(\varepsilon)\ .
\end{eqnarray}
Now, we apply Proposition \ref{weighted} with $w=|u_1|^2$ and $f=(\lambda_1-\lambda_2)^2$ and the conclusion follows.
\end{proof}

\end{document}